\newcommand{\R}{\mathbb{R}}
\newcommand{\be}{\begin{equation}}
\newcommand{\ee}{\end{equation}}
\newcommand{\ba}{\begin{aligned}}
\newcommand{\ea}{\end{aligned}}
\newcommand{\eps}{\epsilon}
\newcommand{\Pt}{\mathcal{P}_2}
\newcommand{\mpr}{\Pt(\R^d)}
\newtheorem{Theorem}{Theorem}[section]
\newtheorem{Lemma}[Theorem]{Lemma}
\newtheorem{Rem}[Theorem]{Remark}
\newtheorem{Def}[Theorem]{Definition}
\newtheorem{Prop}[Theorem]{Proposition}
\newtheorem{Cor}[Theorem]{Corollary}
\newcommand{\triplenorm}[1]{{\left\vert\kern-0.25ex\left\vert\kern-0.25ex\left\vert #1 
    \right\vert\kern-0.25ex\right\vert\kern-0.25ex\right\vert}}
\title{An approximation of the squared Wasserstein distance and an application to Hamilton-Jacobi equations}
\begin{document}





\author{Charles Bertucci \textsuperscript{a},  Pierre-Louis Lions\textsuperscript{b,c}
}
\address{ \textsuperscript{a} CMAP, Ecole Polytechnique, UMR 7641, 91120 Palaiseau, France,\\ \textsuperscript{b} Universit\'e Paris-Dauphine, PSL Research University,UMR 7534, CEREMADE, 75016 Paris, France,\\ \textsuperscript{c}Coll\`ege de France, 3 rue d'Ulm, 75005, Paris, France. }

\maketitle

\begin{abstract}We provide a simple $C^{1,1}$ approximation of the squared Wasserstein distance on $\R^d$ when one of the two measures is fixed. This approximation converges locally uniformly. More importantly, at points where the differential of the squared Wasserstein distance exists, it attracts the differentials of the approximations at nearby points. Our method relies on the Hilbertian lifting of PL Lions and on the regularization in Hilbert spaces of Lasry and Lions. We then provide an application of this result by using it to establish a comparison principle for an Hamilton-Jacobi equation on the set of probability measures.
\end{abstract}



\setcounter{tocdepth}{1}

\tableofcontents
\section{Introduction}
In recent years, the study of partial differential equations on the set of probability measures, sometimes restricted through some integrability condition, has gained a lot of interest. Several difficulties arise when using traditional analysis methods and one of them is that quite often, one would like to use the Wasserstein distance as the natural distance on this set. A main difficulty lies in the fact that, contrary to Euclidean cases, the squared of this function is not smooth. It is only super-differentiable everywhere. We provide here a tool that we believe allows to address this issue in several situations. Namely, we present an approximation of the squared Wasserstein distance which is quite regular. More importantly, this approximation is somehow coherent with the differentiability of the squared Wasserstein distance in the sense that, at points where the differential of the squared Wasserstein distance exists, it attracts the differentials of the approximations at nearby points. 

Our method of proof relies on considering an approximation in the spirit of Lasry and Lions \citep{lasry}. The proof is established by lifting the Wasserstein distance in the space of random variables (the standard lifting introduced by PL Lions in \citep{lionscours}).

We first present our approximation and its convergence. We end this paper by giving an application of this result in the study of an Hamilton-Jacobi equation on the set of probability measures.

\section{Preliminaries}
\subsection{The Wasserstein distance}
We denote by $\Pt(\R^d)$ the space of Borel probability measures on $\R^d$ with a finite second order moment, i.e. probability measures $µ$ such that $M_2(µ) :=\int_{\R^d}|x|^2dµ < \infty$. The set $\Pt(\R^d)$ is a complete metric space endowed with the Wasserstein distance $W_2$ defined by
\[
W_2^2(µ,\nu) = \inf_{\gamma \in \Pi(µ,\nu)}\int_{\R^d\times \R^d} |x-y|^2 \gamma(dx,dy),
\]
where $\Pi(µ,\nu)$ is the set of all couplings between $µ$ and $\nu$, i.e. the set of probability measures on $\R^d\times \R^d$ with first marginal $µ$ and second marginal $\nu$. 

In all that follows, we consider a standard atomeless probabilistic space $(\Omega, \mathcal{A}, \mathbb{P})$ whose expectation is denoted by $\mathbb{E}$. We denote by $H = \mathbb{L}^2(\Omega,\R^d)$, the space of squared integrable random variables on $\R^d$. Recall that $H$ is a separable Hilbert space endowed with the scalar product $\langle X,Y\rangle = \mathbb{E}[X\cdot Y]$ where $x\cdot y$ is the $\R^d$ scalar product between $x$ and $y$. The norm of $X \in H$ is noted  $\|X\|$. We note $X \sim µ$ when $µ$ is the law $\mathcal{L}(X)$ of $X \in H$.

The Wasserstein distance $W_2$ satisfies for all $µ, \nu \in \Pt(\R^d)$,
\[
W^2_2(µ,\nu) = \inf \{\mathbb{E}[|X-Y|^2]| X \sim µ, Y \sim \nu\}.
\]

We have the following immediate result.
\begin{Lemma}\label{lemma:1}
For any $X \sim µ$, we have that
\[
W_2^2(µ,\nu) = \inf \{\mathbb{E}[|X-Y|^2]| Y \sim \nu\}.
\]
\end{Lemma}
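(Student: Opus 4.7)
The plan is to show the two inequalities separately. The inequality $\inf\{\mathbb{E}[|X-Y|^2] : Y \sim \nu\} \ge W_2^2(\mu,\nu)$ is immediate, because any $Y \sim \nu$ yields the coupling $\mathcal{L}(X,Y) \in \Pi(\mu,\nu)$, and the right-hand side is an infimum over \emph{all} couplings.

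For the converse inequality, I would fix $\varepsilon > 0$ and a near-optimal coupling $\gamma_\varepsilon \in \Pi(\mu,\nu)$ satisfying
\[
\int_{\R^d \times \R^d} |x-y|^2 \, \gamma_\varepsilon(dx,dy) \leq W_2^2(\mu,\nu) + \varepsilon.
\]
The goal is then to realize $\gamma_\varepsilon$ on $(\Omega, \mathcal{A}, \mathbb{P})$ as the joint law of the \emph{given} $X$ together with some $Y \sim \nu$ that I construct.

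The key step is the following ``conditional realization'' argument. Disintegrate $\gamma_\varepsilon(dx,dy) = \mu(dx)\, K(x,dy)$, where $K$ is a Borel stochastic kernel on $\R^d$. Because $(\Omega,\mathcal{A},\mathbb{P})$ is a standard atomless probability space, it supports a random variable $U$, uniform on $[0,1]$ and independent of $X$ (this is the classical consequence of atomlessness that I will invoke without reproving it). Using a measurable selection, one can build a Borel map $F : \R^d \times [0,1] \to \R^d$ such that for every $x$, $F(x,U) \sim K(x,\cdot)$ when $U$ is uniform; setting $Y := F(X,U)$ then gives $\mathcal{L}(X,Y) = \gamma_\varepsilon$, hence in particular $Y \sim \nu$. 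Consequently
\[
\mathbb{E}[|X-Y|^2] = \int |x-y|^2 \, \gamma_\varepsilon(dx,dy) \leq W_2^2(\mu,\nu) + \varepsilon,
\]
and letting $\varepsilon \to 0$ finishes the proof.

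The main obstacle here is not analytic but measure-theoretic: producing $Y$ with the prescribed joint law with the \emph{already given} $X$. Everything hinges on the standing assumption that $(\Omega,\mathcal{A},\mathbb{P})$ is standard and atomless, which both guarantees the existence of an auxiliary independent uniform variable $U$ and allows the kernel $K$ to be represented through a measurable map $F$. Once this setup is in place, the rest of the argument is just taking expectations of a near-optimal plan.
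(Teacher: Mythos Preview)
Your argument has a real gap at the measure-theoretic step. The claim that a standard atomless probability space always carries a uniform $U$ independent of the given $X$ is false: take $(\Omega,\mathcal{A},\mathbb{P})=([0,1],\mathcal{B},\mathrm{Leb})$ and $X(\omega)=\omega$. Then $\sigma(X)=\mathcal{A}$, so every random variable on $\Omega$ is $\sigma(X)$-measurable; a $U$ independent of $X$ would be independent of itself, hence a.s.\ constant, and in particular not uniform. More generally, nothing in the hypotheses prevents $X$ from exhausting all the randomness of $\Omega$, in which case your construction $Y=F(X,U)$ degenerates to a deterministic function of $X$ and cannot realize an arbitrary coupling $\gamma_\varepsilon$. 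The existence of such an auxiliary $U$ is guaranteed only on a suitably enlarged space (e.g.\ $\Omega\times[0,1]$), which is not what the lemma asserts.

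The paper avoids this obstruction by a different route. It first realizes an optimal pair $(X^*,Y^*)$ on $\Omega$ (atomlessness suffices for this) and then invokes an approximation result for measure-preserving maps (Lemma~5.23 in \citep{carmona2017probabilistic}): for every $\varepsilon>0$ there is a measure-preserving $\tau:\Omega\to\Omega$ with $\|X-X^*\circ\tau\|_\infty\le\varepsilon$. Setting $Y:=Y^*\circ\tau$ gives $Y\sim\nu$ and
\[
\mathbb{E}\bigl[|X-Y|^2\bigr]\le W_2^2(\mu,\nu)+2\varepsilon\,W_2(\mu,\nu)+\varepsilon^2,
\]
from which the lemma follows as $\varepsilon\to 0$. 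The point is that one does not manufacture independent randomness; one rotates an already realized optimal pair by a measure-preserving map so that its first coordinate nearly coincides with the prescribed $X$.
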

\begin{proof}
Consider an optimal couple $(X^*,Y^*)$ in $W_2^2(µ,\nu)$. It is a classical result of probabilities, see for instance Lemma 5.23 in \citep{carmona2017probabilistic}, that for any $\eps > 0$, there exists a measure preserving map $\tau : \Omega \to \Omega$ such that $\|X-X^*\circ \tau\|_\infty \leq \eps$. Observe now that
\[
\mathbb{E}[|X - Y^*|^2] \leq W_2^2(µ,\nu) + 2\eps W_2(µ,\nu) + \eps^2.
\]
Hence the result follows by taking the limit $\eps \to 0$ in the above expression.
\end{proof}

\subsection{Regularity of functions in $\Pt(\R^d)$}
Consider $\Phi : \Pt(\R^d) \to \R$. We say that $\Phi$ is differentiable at $µ \in \Pt(\R^d)$ if there exists $\phi \in L^2_µ(\R^d,\R^d)$ such that for all $µ' \in \mpr$, $\gamma\in \Pi(µ,µ')$,
\[
\Phi(µ') - \Phi(µ) = \int_{\R^d\times \R^d} \phi(x) \cdot (y-x)\gamma(dx,dy) + o\left( \left(\int_{\R^d\times \R^d}|x-y|^2\gamma(dx,dy)\right)^\frac12\right).
\]
In this case, we note for all $x \in \R^d$, $\phi(x) = D_µ\Phi(µ,x)$.
We say that $\Phi$ is continuously differentiable over $\mpr$ if it is differentiable at every point and if for any compact set $K \subset \mpr$, there exists a continuous function $\omega: [0,\infty)\times [0,\infty)$ with $\omega(0) = 0$ such that for any $µ,µ' \in K$, for any coupling $\gamma\in \Pi(µ,µ')$
\[
\int_{\R^d \times \R^d}|D_µ\Phi(µ,x) - D_µ\Phi(µ',y)|^2\gamma(dx,dy) \leq \omega\left(\int_{\R^d\times \R^d}|x-y|^2\gamma(dx,dy)\right).
\]
Finally, we say that $\Phi$ is $C^{1,1}$ if the previous $\omega$ can be taken linear.\\

These notions of regularity might seem unusual stated in this way. Let us emphasize the fact that they are the simple translations of the associate regularity properties for the lift $\tilde \Phi : H \to \R$ defined by $\tilde \Phi (X) = \Phi(\mathcal{L}(X))$. In particular, $\Phi$ is $C^{1,1}$ if and only if $\tilde \Phi$ is $C^{1,1}$, in the usual sense in $H$.\\

Recalling Theorem 3.2 in \citep{alfonsi}, we know that as soon as there exists an optimal (for the quadratic cost) transport map $T: \R^d \to \R^d$ from $µ$ to $\nu$, then $\Phi:µ' \to W_2^2(µ',\nu)$ is differentiable at $µ$ and 
\[
D_µ\Phi(µ,x) =2(x - T(x)).
\]
In particular, at such points, we have the relation
\be\label{eq:1}
\int_{\R^d}|D_µ\Phi(µ,x)|^2µ(dx) = 4 \Phi(µ).
\ee
This relation links the squared of the norm of $D_µ\Phi$ with $\Phi$ itself.

\section{The approximation}
We fix $\nu \in \Pt(\R^d)$. The previous Lemma provides a natural lift of $µ \to W^2_2(µ,\nu)$ to a function of $U:H\to \R$ defined by $W_2^2(\mathcal{L}(X),\nu) =:U(X) = \inf_{Y \sim \nu} \mathbb{E}[|X-Y|^2]$.\\

\subsection{Basic properties}
Note that $U$ is $1$-semi-concave, more precisely, for any $Z \in H$, $X \to U(X) - \|X-Z\|^2$ is concave as the infimum of linear functions since
\[
U(X) - \|X-Z\|^2 = \inf_{Y \sim \nu}\{ \mathbb{E}[(X-Z)\cdot(Z - Y)] + \|Z-Y\|^2\}.
\]
This property hints that a regularization of $U$ by means of a sup-convolution is possible.

For $\delta \in (0,1)$, we consider
\be\label{defPhi}
\Phi_\delta(µ) = \sup_{µ'\in \Pt(\R^d)}\left\{ W_2^2(µ',\nu) - \frac1\delta W_2^2(µ',µ)\right\}.
\ee
Thanks to Lemma \ref{lemma:1}, we can lift $\Phi_\delta$ to $H$ with the formula
\[
\ba
\Phi_\delta(\mathcal{L}(X)) &= \sup_{µ' \in \Pt(\R^d)}\sup_{X'\sim µ'}\left\{ W_2^2(\mathcal{L}(X'),\nu)- \frac1\delta \mathbb{E}[|X-X'|^2]\right\}\\
&=\sup_{X' \in H} \inf_{Y \sim \nu}\left\{ \mathbb{E}[|X'-Y|^2] - \frac1\delta \mathbb{E}[|X-X'|^2]\right\}.
\ea
\]
We shall note $U_\delta(X) = \Phi_\delta(\mathcal{L}(X))$. Note that by definition we always have
\[
W_2^2(µ,\nu) \leq \Phi_\delta(µ).
\]
On the other hand
\[
\Phi_\delta(\mathcal{L}(X)) \leq\inf_{Y \sim \nu}  \sup_{X' \in H} \left\{ \mathbb{E}[|X'-Y|^2] - \frac1\delta \mathbb{E}[|X-X'|^2]\right\}.
\]
For any $Y$, the supremum in $X'$ is reached for $X'$ such that $\delta(X' - Y) = X' - X$, which yields $X' = (1 -\delta)^{-1}(X - \delta Y)$. Using this information, we obtain 
\[
\Phi_\delta(\mathcal{L}(X)) \leq \frac{1}{1-\delta}\inf_{Y \sim \nu}\mathbb{E}[|X-Y|^2].
\]
Hence the convergence of $(\Phi_\delta)_{\delta > 0}$ as $\delta \to 0$ follows from the relation
\be\label{eq:encadrement}
W_2^2(µ,\nu) \leq \Phi_\delta(µ) \leq \frac{1}{1-\delta}W_2^2(µ,\nu).
\ee

\subsection{Further description of the approximation}
We give sufficient conditions under which the second inequality of \eqref{eq:encadrement} is actually an equality. We then present a remark to understand what happens outside the assumptions of the following result.

\begin{Prop}
Assume that $µ,\nu \in \Pt(\R^d)$ are such that the optimal transport maps $T$ and $T'$ from $µ$ toward $\nu$ and vice versa are Lipschitz continuous. Then, 
\[
\delta \leq \max \left\{ \sigma^-, \frac{1}{\sigma^+}\right\} \Rightarrow \Phi_\delta(µ) = \frac{1}{1-\delta}W_2^2(µ,\nu),
\]
where $\sigma^-$ is the essential infimum of the smallest eigenvalue\footnote{Recall that $T$ and $T'$ are monotone maps.} of $DT'$ and $\sigma^+$ is the essential supremum of the largest eigenvalue of $DT$.
\end{Prop}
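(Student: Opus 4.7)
The plan is to exhibit a test measure $\mu^*$ that saturates the upper bound from \eqref{eq:encadrement}. Let $X\sim\mu$ be coupled optimally with $T(X)\sim\nu$, and set
\[
X^*:=\frac{X-\delta T(X)}{1-\delta},\qquad \mu^*:=\mathcal{L}(X^*).
\]
This ansatz is suggested by the minimax computation that produced the upper bound: for fixed $Y\sim\nu$ the $X'$ realising the inner supremum is $(X-\delta Y)/(1-\delta)$, and we plug in $Y=T(X)$. From the identities $X-X^*=\tfrac{\delta}{1-\delta}(T(X)-X)$ and $X^*-T(X)=\tfrac{1}{1-\delta}(X-T(X))$ one reads off
\[
\mathbb{E}[|X-X^*|^2]=\frac{\delta^2}{(1-\delta)^2}W_2^2(\mu,\nu),\qquad \mathbb{E}[|X^*-T(X)|^2]=\frac{1}{(1-\delta)^2}W_2^2(\mu,\nu).
\]
Testing $\mu^*$ in \eqref{defPhi} and using the trivial bound $W_2^2(\mu,\mu^*)\leq\mathbb{E}[|X-X^*|^2]$ reduces the matching lower bound $\Phi_\delta(\mu)\geq\tfrac{1}{1-\delta}W_2^2(\mu,\nu)$ to the single identity $W_2^2(\mu^*,\nu)=\mathbb{E}[|X^*-T(X)|^2]$, i.e.\ to the optimality of the coupling $(X^*,T(X))$ between $\mu^*$ and $\nu$.

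This optimality is the only nontrivial step and the only place the regularity hypotheses come in. By Brenier's theorem it is equivalent to realising the map $X^*\mapsto T(X)$ as the gradient (a.e.) of a convex function, and I see two symmetric routes corresponding to the two quantities appearing in the $\max$. Under $\delta\leq 1/\sigma^+$, I would write $T=\nabla\psi$ with $\psi$ convex. The map $X\mapsto X^*$ is itself the gradient of $\tilde\psi(x):=\frac{|x|^2}{2(1-\delta)}-\frac{\delta}{1-\delta}\psi(x)$, whose Hessian $\frac{1}{1-\delta}(I-\delta D^2\psi)$ is positive semidefinite precisely because $D^2\psi$ has eigenvalues in $[0,\sigma^+]$ and $\delta\sigma^+\leq 1$; a short calculation then gives $T\circ(\nabla\tilde\psi)^{-1}=\nabla\chi$ with $\chi:=\tfrac{1}{\delta}\tilde\psi^*-\tfrac{1-\delta}{2\delta}|\cdot|^2$, whose Hessian simplifies to $(1-\delta)\,D^2\psi\,(I-\delta D^2\psi)^{-1}$; both factors are spectral functions of the same symmetric matrix $D^2\psi$, hence commute and yield a symmetric positive semidefinite product, so $\chi$ is convex. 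Under $\delta\leq\sigma^-$ the argument is even more direct on the $\nu$ side: the map $R(y):=\frac{T'(y)-\delta y}{1-\delta}$ pushes $\nu$ onto $\mu^*$ (since $R(T(X))=X^*$) and has Jacobian $\frac{1}{1-\delta}(DT'-\delta I)$, which is manifestly symmetric and positive semidefinite under the hypothesis on $\sigma^-$; hence $R$ is itself a Brenier map and $(T(X),X^*)$ is optimal.

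Either way the coupling $(X^*,T(X))$ is optimal, and plugging back into the reduction of the first paragraph gives
\[
\Phi_\delta(\mu)\geq \mathbb{E}[|X^*-T(X)|^2]-\tfrac{1}{\delta}\mathbb{E}[|X-X^*|^2]=\frac{1-\delta}{(1-\delta)^2}W_2^2(\mu,\nu)=\frac{1}{1-\delta}W_2^2(\mu,\nu),
\]
which combined with \eqref{eq:encadrement} closes the argument. The point I expect to be most delicate is passing from a pointwise symmetric positive semidefinite Jacobian to a genuine global convex potential in order to invoke Brenier; this should be routine here because every matrix in play is a spectral function of $D^2\psi$ (or of $DT'$), so the potential can be written down explicitly as above, and the Lipschitz assumptions on $T$ and $T'$ ensure the required regularity.
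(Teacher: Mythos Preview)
Your proof is correct and your second route---observing that $R(y)=\frac{T'(y)-\delta y}{1-\delta}$ is the gradient of the convex function $\frac{1}{1-\delta}\bigl(\varphi-\tfrac{\delta}{2}|\cdot|^2\bigr)$ (where $T'=\nabla\varphi$) when $\delta\le\sigma^-$, hence a Brenier map from $\nu$ to $\mu^*$---is exactly the paper's argument. The paper further remarks that differentiating $T'\circ T=\mathrm{Id}$ gives $DT'(T(x))=(DT(x))^{-1}$, so in fact $\sigma^-=1/\sigma^+$ and the two conditions in the $\max$ coincide; your Route~1 via the Legendre transform $\tilde\psi^*$ is therefore a correct but unnecessarily elaborate treatment of the same case rather than a genuinely separate one.
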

Before presenting the proof of this result, we insist upon the fact that thanks to the now quite exhaustive literature on Monge-Amp\`ere equations, there are various types of situations in which we can guarantee that the conditions of the previous proposition are satisfied. For instance, it is the case if $µ$ and $\nu$ are absolutely continuous with respect to the Lebesgue measure, that they are supported on respectively $X$ and $Y$, two bounded open sets such that $Y$ is convex, and the densities of $µ$ and $\nu$ are bounded from above and away from $0$ and that they are H\"older continuous on respectively $X$ and $Y$. We refer to the survey \citep{figalli} and the reference therein for more details on the regularity of the transport maps.
\begin{proof}
The proof basically consists in showing that the intermediate measure $µ'$ which achieves the maximum in \eqref{defPhi} is well behaved. Take $(X,Y)$ an optimal coupling for $(µ,\nu)$, in particular, almost surely, $X = T'(Y)$ and $Y = T(X)$. We want to show that the supremum in $U_\delta(X)$ is reached for 
\[
X' = \frac{1}{1-\delta}X - \frac{\delta}{1-\delta} Y = \frac{1}{1-\delta}(T'-\delta Id)(Y).
\]
Remark that if $\delta \leq \sigma^-$, then $T' - \delta Id$ is monotone. On the other hand, since, almost everywhere on the support of $µ$, $T'(T(x)) = x$, it follows that this condition also reduces to $\delta \leq \frac{1}{\sigma^+}$.
Under this smallness assumption on $\delta$, we have in fact more than the monotonicity of the transport map, we have that it is the gradient of some convex function, since $T'$ is. Hence, $(X',Y)$ is an optimal coupling, see for instance Theorem 1.48 in \citep{santambrogio}. This implies that
\[
\ba
\Phi_\delta(µ) &\geq \mathbb{E}[|X'-Y|^2] - \frac{1}{\delta}\mathbb{E}[|X-X'|^2]\\
& = \frac{1}{1-\delta} W_2^2(µ,\nu).
\ea
\]
The result is thus proved.
\end{proof}
We now present a formal remark which we believe helps to understand the case in which $\delta$ is not small enough.
\begin{Rem}
Assume that we are in the case in which $µ$ and $\nu$ are as well behaved as in the previous Proposition and that they have a density which is positive on their support, but that the equality for $\Phi_\delta$ is not reached, which implies thanks to our result that $\delta$ is not small enough. 
Assume that the intermediate measure $µ'$, which achieves the maximum in \eqref{defPhi}, is sufficiently well behaved. Then it must be that for some convex function $g: \R^d\to \R$, $\nabla g_{\#}\nu = µ'$. Since $µ'$ and $\nu$ are sufficiently well behaved, it cannot be that $D^2g = 0$ at some point in the support of $\nu$. But on the other hand, if $D^2 g > 0$ uniformly, then the argument of the previous Proposition shows that the equality much be reached.

We believe this hints strongly at the fact that whenever $\Phi_\delta(µ) < \frac{1}{1-\delta}W_2^2(µ,\nu)$, the intermediate measure $µ'$ has some singularity.
\end{Rem}

\section{Regularity of the approximation}
We now turn to the regularity of $U_\delta$, which will directly imply some regularity for $\Phi_\delta$. Following the computations of Lasry and Lions \citep{lasry}, we obtain semi-concavity and semi-convexity estimates for $U_\delta$.

\begin{Prop}
For any $\delta \in (0,1)$, $U_\delta$ is $C^{1,1}$. More precisely, for any $Z \in H$,
\[
X \to U_\delta(X) + \frac 1\delta \|X-Z\|^2 \text{ is convex,}
\]
\[
X \to U_\delta(X) - \frac{1}{1-\delta} \|X-Z\|^2 \text{ is concave.}
\]
\end{Prop}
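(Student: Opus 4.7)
The plan is to work directly with the $H$-side formulation
\[
U_\delta(X) = \sup_{X' \in H}\left\{ U(X') - \frac{1}{\delta}\|X - X'\|^2\right\},
\]
where $U(X) = W_2^2(\mathcal{L}(X),\nu)$, and to read off both one-sided estimates from this formula using the fact (noted just before \eqref{defPhi}) that $U$ is itself $1$-semi-concave, i.e.\ $V_Z(X') := U(X') - \|X' - Z\|^2$ is concave on $H$ for every $Z \in H$. Once the semi-convex and semi-concave bounds are in hand, $C^{1,1}$ regularity will follow from the standard Hilbert-space fact that a function which is simultaneously semi-convex and semi-concave with quadratic moduli is Fr\'echet differentiable with Lipschitz gradient.

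For the convex bound, I would fix $Z \in H$ and add $\frac{1}{\delta}\|X-Z\|^2$ inside the supremum. For each fixed $X' \in H$, the map
\[
X \longmapsto -\frac{1}{\delta}\|X - X'\|^2 + \frac{1}{\delta}\|X - Z\|^2 = \frac{1}{\delta}\bigl(\|Z\|^2 - \|X'\|^2\bigr) + \frac{2}{\delta}\langle X,\, X' - Z\rangle
\]
is affine. Hence $X \mapsto U_\delta(X) + \frac{1}{\delta}\|X-Z\|^2$ is a supremum of affine functions of $X$, and therefore convex.

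For the concave bound, I would substitute $U(X') = V_Z(X') + \|X'-Z\|^2$ into the sup-convolution and complete the square in $X'$. A direct computation gives
\[
\|X'-Z\|^2 - \frac{1}{\delta}\|X-X'\|^2 = -\frac{1-\delta}{\delta}\left\|X' - Z - \frac{X-Z}{1-\delta}\right\|^2 + \frac{1}{1-\delta}\|X-Z\|^2,
\]
so that
\[
U_\delta(X) - \frac{1}{1-\delta}\|X-Z\|^2 = \sup_{X' \in H}\left\{ V_Z(X') - \frac{1-\delta}{\delta}\left\|X' - Z - \frac{X-Z}{1-\delta}\right\|^2 \right\}.
\]
The integrand on the right is jointly concave in $(X,X')$: $V_Z(X')$ is concave in $X'$ and independent of $X$, while the quadratic term is the negative of the squared norm of a map that is affine in $(X,X')$, scaled by the positive constant $\frac{1-\delta}{\delta}$ (which uses $\delta \in (0,1)$). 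Since the partial supremum over one variable of a jointly concave function is concave in the other, the displayed right-hand side is concave in $X$, as required.

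The main obstacle I expect is conceptual rather than computational: one must resist the temptation to argue pointwise in $X'$, since a supremum of concave-in-$X$ functions is not concave in general, and instead exploit joint concavity in $(X, X')$. This is exactly where the $1$-semi-concavity of $U$ enters, together with the constraint $\delta \in (0,1)$ which ensures both that the Lasry--Lions penalty overpowers $U$'s semi-concavity and that the constant $\frac{1}{1-\delta}$ appearing in the estimate is finite and positive.
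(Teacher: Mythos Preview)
Your proof is correct and follows essentially the same Lasry--Lions strategy as the paper: both the convexity and concavity bounds come from writing $U_\delta$ as a supremum and checking, for the concave part, joint concavity in $(X,X')$ before taking the partial supremum. Your version packages the argument slightly more cleanly by completing the square and invoking directly the $1$-semi-concavity of $U$ (i.e.\ concavity of $V_Z$), whereas the paper expands the quadratic form explicitly and checks the sign condition $C\geq (1-\delta)^{-1}$, but the underlying mechanism is identical.
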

\begin{proof}
We simply follow the proof from \citep{lasry}. For any $X \in H$, 
\[
\ba
U_\delta(X) &= \sup_{X' \in H}\inf_{Y \sim \nu}  \mathbb{E}[|X'-Y|^2] - \frac1\delta \mathbb{E}[|X-X'|^2]\\
& = -\frac 1 \delta \|X\|^2 + \sup_{X' \in H}\left\{ \frac2\delta \mathbb{E}[X\cdot X'] - \frac1\delta \|X'\|^2 + \inf_{Y \sim \nu}  \mathbb{E}[|X'-Y|^2] \right\}
\ea
\]
Hence, $U_\delta$ is $\delta^{-1}$ semi-convex.

We want to show that $\Psi$ defined by
\[
\Psi(X,X') = \inf_{Y \sim \nu}\mathbb{E}[|X'-Y|^2] - \frac1\delta \mathbb{E}[|X-X'|^2] - C \|X\|^2
\]
is concave in $(X,X')$ for a well chosen $C > 0$. Clearly
\[
\ba
\Psi(X,X') = (1 - \delta)^{-1}&\|X'\|^2 - \delta^{-1}\|X\|^2 +\inf_{Y \sim \nu}\{ \|Y\|^2 - 2 \mathbb{E}[X'\cdot Y] \}\\
&+2\delta^{-1}\mathbb{E}[X'\cdot X] - C \|X\|^2.
\ea
\]
Since $X' \to \inf_{Y \sim \nu}\{ \|Y\|^2 - 2 \mathbb{E}[X'\cdot Y] \}$ is concave in $X'$, we easily deduce that $\Psi$ is indeed concave if $C \geq (1 - \delta)^{-1}$. Recalling the Lemma in \citep{lasry} for instance, we know that $ X \to \sup_{X'} \Psi(X,X')$ is concave, which implies that $U_\delta$ is $(1 - \delta)^{-1}$ semi-concave. Note that the same computation holds if we replace $C \|X\|^2$ by $C \|X-Z\|^2$.
\end{proof}

Furthermore, the gradient of $U_\delta$ can be computed as we now show.
\begin{Prop}
For any $\delta \in (0,1)$, $X \in H$, there exists a unique $X^* \in H$ such that $U_\delta (X) = U(X^*) - \delta^{-1}\|X-X^*\|^2$. Moreover,
\[
\nabla U_\delta(X) = \frac2\delta (X^* - X).
\]
\end{Prop}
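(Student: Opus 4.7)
The plan is to interpret $U_\delta$ as a standard sup-convolution of $U$ with a quadratic. By Lemma \ref{lemma:1}, the inner infimum in the definition of $U_\delta$ equals $U(X')$, so
\begin{equation*}
U_\delta(X) \;=\; \sup_{X' \in H}\Bigl\{\, U(X') - \tfrac{1}{\delta}\|X-X'\|^2\,\Bigr\}.
\end{equation*}
The statement then reduces to showing that this supremum is uniquely attained in $H$ and that the gradient of the resulting sup-convolution is given by the usual envelope formula.

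For existence and uniqueness of $X^*$, I would exploit the $1$-semi-concavity of $U$ already observed at the beginning of Section 3, namely that $X' \mapsto U(X') - \|X'\|^2$ is concave (and continuous as an infimum of affine functions). Expanding
\begin{equation*}
U(X') - \tfrac{1}{\delta}\|X-X'\|^2 \;=\; \bigl[U(X') - \|X'\|^2\bigr] - \bigl(\tfrac{1}{\delta}-1\bigr)\|X'\|^2 + \tfrac{2}{\delta}\langle X, X'\rangle - \tfrac{1}{\delta}\|X\|^2,
\end{equation*}
and using that $\delta^{-1}-1 > 0$ since $\delta \in (0,1)$, the right-hand side is the sum of a continuous concave function and a strongly concave quadratic in $X'$. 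It is therefore strongly concave, upper semicontinuous, and coercive. Standard Hilbert-space arguments (super-level sets of a concave usc function are convex and weakly closed, and strong concavity forces them to be bounded, hence weakly compact) yield the existence of a maximizer $X^*$, while strict concavity yields its uniqueness.

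To identify $\nabla U_\delta(X)$, I would use an envelope-type inequality. For any $V \in H$ and $t \in \R$, the definition of $U_\delta$ and the expression $U_\delta(X) = U(X^*) - \delta^{-1}\|X-X^*\|^2$ give
\begin{equation*}
U_\delta(X+tV) \;\geq\; U(X^*) - \tfrac{1}{\delta}\|X+tV-X^*\|^2 \;=\; U_\delta(X) + \tfrac{2t}{\delta}\langle V, X^*-X\rangle - \tfrac{t^2}{\delta}\|V\|^2.
\end{equation*}
Dividing by $t>0$ and letting $t \to 0^+$ gives $\langle \nabla U_\delta(X), V\rangle \geq \tfrac{2}{\delta}\langle V, X^*-X\rangle$, while applying the same estimate with $-V$ in place of $V$ yields the reverse inequality (using that $U_\delta$ is differentiable by the preceding proposition, so the two one-sided directional derivatives along $V$ exist and equal $\pm\langle \nabla U_\delta(X), V\rangle$). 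Therefore $\nabla U_\delta(X) = \tfrac{2}{\delta}(X^*-X)$.

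The only real subtlety is existence of the maximizer in the infinite-dimensional space $H$: here the hypothesis $\delta < 1$ is essential, as it is precisely the combination of this restriction with the $1$-semi-concavity of $U$ that produces the strong concavity needed for coercivity. Everything else is essentially routine.
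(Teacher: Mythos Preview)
Your proof is correct and essentially mirrors the paper's: both establish the envelope inequality $U_\delta(\tilde X)-U_\delta(X)\geq \tfrac{2}{\delta}\langle X^*-X,\tilde X-X\rangle-\tfrac{1}{\delta}\|\tilde X-X\|^2$ and then invoke the $C^{1}$ regularity of $U_\delta$ from the preceding proposition to identify the gradient. The only cosmetic difference is that you spell out existence and uniqueness of $X^*$ via strong concavity of the objective (making the role of $\delta<1$ explicit), whereas the paper simply declares existence ``immediate'' and recovers uniqueness a posteriori from the formula $X^*=X+\tfrac{\delta}{2}\nabla U_\delta(X)$.
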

\begin{proof}
The existence of $X^*$ is immediate. Consider any $\tilde X \in H$, we can compute
\[
\ba
U_\delta(\tilde X) - U_\delta (X) &\geq U(X^*) - \delta^{-1}\|X^*-\tilde X\|^2 - U(X^*) + \delta^{-1}\|X^*-X\|^2\\
& = -2 \delta^{-1}(X^*-X)(X- \tilde X) - \delta^{-1}\|\tilde X - X\|^2.
\ea
\] 
Hence it follows that $ \frac2\delta (X^* - X) \in \partial^-U_\delta(X)$, since $U_\delta$ is $C^{1}$, we deduce that for any $X \in H$, $\partial^-U_\delta(X)= \{\nabla U_\delta (X)\}$, from which the result follows.
\end{proof}

\section{Convergence of the gradients}
We now turn to the property that we believe is the most interesting of this approximation: the fact that the gradient of $U_\delta$ converges nicely toward the gradient of $U$, whenever the latter is well defined.
\begin{Theorem}\label{thm:main}
Consider $X_0 \in H$ such that $U$ is differentiable at $X_0$. Consider a sequence $(X_\delta)_{\delta \in (0,1)}$ such that $\|X_\delta - X_0\| \to 0$ as $\delta \to 0$. Then 
\[
\lim_{\delta \to 0} \nabla U_\delta(X_\delta) = \nabla U(X).
\]
\end{Theorem}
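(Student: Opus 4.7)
The plan is to work with the Hilbertian lift and combine the explicit form of $\nabla U_\delta$ with the superdifferential structure of the semiconcave function $U$. Let $X_\delta^\ast \in H$ be the unique maximizer in the sup-convolution defining $U_\delta(X_\delta)$, provided by the preceding proposition, so that $U_\delta(X_\delta) = U(X_\delta^\ast) - \delta^{-1}\|X_\delta - X_\delta^\ast\|^2$ and $p_\delta := \nabla U_\delta(X_\delta) = \tfrac{2}{\delta}(X_\delta^\ast - X_\delta)$. Testing the sup at $X' = X_\delta$ yields $\delta^{-1}\|X_\delta - X_\delta^\ast\|^2 \leq U(X_\delta^\ast) - U(X_\delta)$; combined with the quadratic growth of $U$ this first gives the crude bound $\|X_\delta - X_\delta^\ast\|^2 = O(\delta)$, hence $X_\delta^\ast \to X_0$, and then the local Lipschitzness of $U = W_2^2(\mathcal{L}(\cdot),\nu)$ near $X_0$ upgrades this to $\|X_\delta - X_\delta^\ast\| = O(\delta)$, so that $(p_\delta)$ is uniformly bounded in $H$.

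The maximum condition yields the first-order inclusion $p_\delta \in \partial^+ U(X_\delta^\ast)$, where $\partial^+$ is the superdifferential associated with the $1$-semiconcavity of $U$; concretely, for every $X \in H$,
\[
U(X) \leq U(X_\delta^\ast) + \langle p_\delta, X - X_\delta^\ast \rangle + \|X - X_\delta^\ast\|^2.
\]
Testing at $X = X_\delta^\ast - \tfrac{1}{2} p_\delta$ together with $U \geq 0$ produces the tight a priori bound $\|p_\delta\|^2 \leq 4 U(X_\delta^\ast)$. The Wasserstein identity \eqref{eq:1} at the differentiability point reads $\|\nabla U(X_0)\|^2 = 4 U(X_0)$; continuity of $U$ then gives $\limsup_{\delta \to 0} \|p_\delta\|^2 \leq \|\nabla U(X_0)\|^2$.

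To identify the limit, fix an arbitrary $X \in H$ and pass to a weak subsequential limit $p$ of $(p_\delta)$ in the displayed inequality: continuity of $U$ handles $U(X_\delta^\ast) \to U(X_0)$ and $\|X - X_\delta^\ast\|^2 \to \|X - X_0\|^2$, while the cross term splits as $\langle p_\delta, X - X_0\rangle + \langle p_\delta, X_0 - X_\delta^\ast\rangle$, the first converging weakly and the second vanishing since $(p_\delta)$ is bounded and $X_\delta^\ast \to X_0$ strongly. The limit inequality places $p$ in $\partial^+ U(X_0)$, and by $1$-semiconcavity combined with the hypothesis that $U$ is differentiable at $X_0$, this superdifferential reduces to the singleton $\{\nabla U(X_0)\}$. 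Hence $p_\delta \rightharpoonup \nabla U(X_0)$; combined with $\limsup_\delta \|p_\delta\| \leq \|\nabla U(X_0)\|$ and the weak lower semicontinuity of the norm, this forces $\|p_\delta\| \to \|\nabla U(X_0)\|$, and weak plus norm convergence in the Hilbert space $H$ yields the desired strong convergence. The main structural point the argument hinges on is the identity \eqref{eq:1}: it is what makes the a priori bound $\|p_\delta\|^2 \leq 4 U(X_\delta^\ast)$ sharp in the limit and enables the weak-to-strong upgrade.
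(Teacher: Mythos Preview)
Your proof is correct and follows the same overall architecture as the paper's: identify $p_\delta = \nabla U_\delta(X_\delta)$ as a supergradient of $U$ at $X_\delta^\ast$, use stability of the superdifferential together with $X_\delta^\ast \to X_0$ to get weak convergence $p_\delta \rightharpoonup \nabla U(X_0)$, and then upgrade to strong convergence via the norm bound $\limsup_\delta \|p_\delta\|^2 \leq 4U(X_0) = \|\nabla U(X_0)\|^2$ coming from \eqref{eq:1}.

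The tactical differences are worth noting. First, you spell out the convergence $X_\delta^\ast \to X_0$ and the stability of the superdifferential by hand, whereas the paper simply invokes Moreau's continuity result for superdifferentials of concave functions; these are the same argument. Second, and more interestingly, your derivation of the key norm bound is different and arguably cleaner: you plug the test point $X = X_\delta^\ast - \tfrac{1}{2}p_\delta$ into the global $1$-semiconcavity inequality and use $U \geq 0$ to obtain $\|p_\delta\|^2 \leq 4U(X_\delta^\ast)$ directly. The paper instead computes the local Lipschitz constant of $U$ explicitly (via $U(X') \leq \mathbb{E}|X'-Y|^2$ for $Y \sim \nu$) to show $\|p_\delta\| \leq 2W_2(\mathcal{L}(X_\delta^\ast),\nu)$, which is the same estimate. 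Your route uses only the abstract structure (semiconcavity and nonnegativity) and avoids touching the specific form of $U$ at this step; the paper's route is more hands-on but makes the connection to the Wasserstein geometry explicit. Both lead to the identical conclusion.
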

\begin{proof}
For any $\delta$, we note $X_\delta^*\in H$ such that $U_\delta(X_\delta) = U(X_\delta^*) - \delta^{-1}\|X_\delta - X_\delta^*\|^2$. We already saw that $2\delta^{-1}(X_\delta^* - X_\delta) = \nabla U_\delta(X_\delta)$. 

Observe that for any $ X \in H$, by definition of $X_\delta^*$,
\[
\ba
U( X) - U(X_\delta^*) &\leq \delta^{-1}\mathbb{E}[|X-X_\delta|^2 - |X_\delta^*-X_\delta|^2]\\
&= 2\delta^{-1}\mathbb{E}[(X-X_\delta^*)(X_\delta^* - X_\delta)] + \delta^{-1}\|X- X_\delta^*\|^2.
\ea
\] 
This implies that $2\delta^{-1}(X_\delta^*- X_\delta) \in \partial^+U(X_\delta^*)$. We now use Moreau's argument of continuity of the super-differential of a concave function. Define $f: X \to U(X) - \|X-X_0\|^2$ and observe that it is concave. Remark that $2\delta^{-1}(X_\delta^*- X_\delta) - 2 (X_\delta^* - X_0) \in \partial^+f(X_\delta^*)$. Since $X_\delta^* \to X_0$ as $\delta \to 0$ and $\partial^+f(X_0) = \{ \nabla U(X_0)\}$, we obtain, using Moreau's result \citep{moreau}, that
\[
\frac2\delta(X^*_\delta - X_\delta) \rightharpoonup_{\delta \to 0} \nabla U(X_0).
\] 
It now remains to show that this convergence is in fact strong. Recall that $2\delta^{-1}(X_\delta^*- X_\delta) \in \partial^+U(X_\delta^*)$. We want to use the Lipschitz regularity of $U$ to show that this implies a useful bound on the norm of $2\delta^{-1}(X_\delta^*- X_\delta)$. For all $X' \in H, \eps > 0$, there exists $Y \in H, Y \sim \nu$, such that $|U(X') - \mathbb{E}[|X' - Y|^2]| \leq \eps$. It then follows that
\[
\ba
 U(X_\delta^*)- U(X') & \leq \mathbb{E}[|X^*_\delta-Y|^2 - |X'- Y|^2] + \eps \\
& \leq \mathbb{E}[|X'- X_\delta^*|^2 + 2(X_\delta^*-X')\cdot(X'-Y)] + \eps.
\ea
\]
This implies in particular that 
\[
\limsup_{X' \to X_\delta^*} \frac{ U(X_\delta^*)- U(X')}{|X_\delta^*-X'|} \leq 2W_2(\mathcal{L}(X_\delta^*),\nu).
\]
Using this estimate in the translation of the fact that $2\delta^{-1}(X_\delta^*- X_\delta) \in \partial^+U(X_\delta^*)$ yields
\[
\|2\delta^{-1}(X^*_\delta - X_\delta)\| \leq 2 W_2(\mathcal{L}(X_\delta^*),\nu).
\]
This implies that
\[
\limsup_{\delta \to 0}\|2\delta^{-1}(X^*_\delta - X_\delta)\| \leq 2 W_2(\mathcal{L}(X_0),\nu) = \|\nabla U(X_0)\|,
\]
where the last equality follows from \eqref{eq:1}. Hence we obtain that the convergence is in fact a strong convergence which proves the claim.\end{proof}
%
%

\begin{Rem}
One of the strength of this result is that the convergence holds whatever the speed of convergence of $(X_\delta)_{\delta \in (0,1)}$ toward $X_0$. Note also that no assumption on $\nu$ is needed here.
\end{Rem}
We now conclude by giving the following Corollary on the regularity of $\Phi: µ \to W_2^2(µ,\nu)$.
\begin{Cor}
Consider $µ \in \mpr$ a point of differentiability of $\Phi$ and sequences $(\delta_n)_{n \geq 0}$ of positive number converging toward $0$ and $(µ_n)_{n \geq 0}$ converging toward $µ$. Then, for any sequence of couplings $(\gamma_n)_{n \geq 0}$ such that $\gamma_n \in \Pi(µ,µ_n)$ and 
\[
\lim_{n \to \infty} \int_{\R^d \times \R^d}|x-y|^2\gamma_n(dx,dy) = 0,
\]
 it holds that
\[
\lim_{n \to \infty}\int_{\R^d\times \R^d}|D_µ\Phi_{\delta_n}(µ_n,y) - D_µ\Phi(µ,x)|^2 \gamma_n(dx,dy)  = 0.
\]
\end{Cor}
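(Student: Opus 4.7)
The plan is to lift the statement to the Hilbertian setting and deduce it as a direct consequence of Theorem \ref{thm:main}. First, using the atomlessness of $(\Omega, \mathcal{A}, \mathbb{P})$, I would fix once and for all a random variable $X \in H$ with $X \sim \mu$, and then, for each $n$, construct $X_n \in H$ on the same probability space such that the joint law $\mathcal{L}(X, X_n)$ coincides with $\gamma_n$. This is possible because a standard atomless space is rich enough to support the regular conditional distributions of the $\gamma_n$ given their first marginal: one may for instance realize $\Omega \cong \Omega_1 \times \Omega_2$ with both factors atomless, place $X$ on $\Omega_1$, and sample $X_n$ on $\Omega_2$ according to the disintegration of $\gamma_n$ with respect to $X$. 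With this construction,
\[
\|X_n - X\|^2 = \mathbb{E}\bigl[|X_n - X|^2\bigr] = \int_{\R^d \times \R^d} |x - y|^2 \gamma_n(dx, dy) \xrightarrow[n \to \infty]{} 0.
\]

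Next, since $\Phi$ is differentiable at $\mu$, the lift $U = \Phi \circ \mathcal{L}$ is differentiable at $X$, with $\nabla U(X) = D_\mu \Phi(\mu, X)$ viewed as an element of $H$. I would then apply Theorem \ref{thm:main} along the sequence $(\delta_n, X_n)$ to obtain
\[
\nabla U_{\delta_n}(X_n) \xrightarrow[n \to \infty]{} \nabla U(X) \quad \text{strongly in } H.
\]
Since $U_{\delta_n}$ is $C^{1,1}$, the same Hilbert-to-Wasserstein identification gives $\nabla U_{\delta_n}(X_n) = D_\mu \Phi_{\delta_n}(\mu_n, X_n)$ in $H$. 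Because $(X, X_n) \sim \gamma_n$, this strong convergence reads
\[
\int_{\R^d \times \R^d} |D_\mu \Phi_{\delta_n}(\mu_n, y) - D_\mu \Phi(\mu, x)|^2 \gamma_n(dx, dy) = \bigl\|\nabla U_{\delta_n}(X_n) - \nabla U(X)\bigr\|^2 \longrightarrow 0,
\]
which is exactly the desired conclusion.

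The only substantive step is the probabilistic bookkeeping needed to realize the prescribed couplings $\gamma_n$ on the fixed atomless space while keeping $X$ unchanged across $n$; the same kind of measure-preserving construction was already used in the proof of Lemma \ref{lemma:1}. Once this is set up, the corollary is simply the translation of Theorem \ref{thm:main} through the lift, and, consistently with the remark following that theorem, no rate on $\int |x - y|^2 \gamma_n(dx, dy) \to 0$ and no regularity assumption on $\nu$ or on the approaching sequence $(\mu_n)_{n \geq 0}$ is required.
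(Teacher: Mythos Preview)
Your argument is correct and is precisely the intended derivation: the paper states the corollary without proof because it is meant to be the direct translation of Theorem~\ref{thm:main} through the Hilbertian lift, which is exactly what you carry out. The only nontrivial ingredient is the realization of the couplings $\gamma_n$ with a common first coordinate $X$, and your disintegration-based construction on a product atomless space is the standard way to do this.
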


\section{Applications to Hamilton-Jacobi equations on the space of probability measures}
\subsection{Setting}
We consider the Hamilton-Jacobi (HJ) equation
\be\label{hjb}
\lambda U + \int_{\R^d}F(µ,x,D_µU(µ,x))µ(dx) - \int_{\R^d} \text{div}_x(D_µU(µ,x))µ(dx) = G(µ) \text{ in } \Pt(\R^d),
\ee
where $\lambda >0, F : \Pt(\R^d)\times \R^d  \times \R^d \to \R$ and $G: \Pt(\R^d) \to \R$ are given and $\text{div}_x$ is the standard divergence operator on $\R^d$. More precise assumptions on $F$ shall be made later on, but we are only concerned here with subquadratic $F$, i.e., there exists $C > 0$ such that
\be\label{eq:Fquad}
\forall µ, x,p, \quad |F(µ,x,p)| \leq \frac{C_F}{2}(1 + |p|^2).
\ee
The equation \eqref{hjb}, or similar counterparts, are namely studied in mean field optimal control problems, see \citep{daudin1,daudin2,cosso,bayraktar,soner,conforti1,conforti2,conforti3,cecchin} for recent developments and the reference therein for more details on this topic, large deviations, see for instance \citep{feng} or \citep{bertucci2024forthcoming} for a slightly different setting or in differential games \citep{cardaliaguet,daudin2} for instance. One of the main difficulty in \eqref{hjb} is that it involves the term with the divergence which is singular. In the absence of such a term, the study of this type of equation is more standard, we refer for instance to \citep{bertucci2023stochastic}.

The standard candidate theory to study this kind of equations is the theory of viscosity solutions. For a presentation of this theory in finite dimensional spaces, we refer to \citep{crandall1992user}. In the space of probability measures, a general theory is lacking at the moment, as different notions are used depending on variations of \eqref{hjb} which are studied. In most cases, the central result needed to develop a proper study of such HJ equations is a comparison principle between a viscosity super-solution and a viscosity sub-solution, which states that the former is larger than the latter.

In finite dimension, viscosity solutions are usually equivalently defined through smooth test functions or through super/sub differentials. Such an equivalence is not obvious in the space of probability measures, see for instance \citep{bertucci2023stochastic}, as they are elements of general super-differentials which are not necessary given by the differential of smooth test functions. Working with sub/super-differentials is somehow similar to allowing the use of the squared Wasserstein distance as a test function.\\

The main result of this section is to prove that, thanks to presence of the term involving the divergence in \eqref{hjb}, we can indeed use the Wasserstein as if it was a smooth test function. The argument is twofold. On one hand we use the approximation introduced above and on the other hand we take advantage of the singular term to prove that point of interest are points at which the Wasserstein distance is differentiable. This strategy was also used in \citep{daudin1}, but here we shall be able to ask for fewer regularity, namely because of the approximation we provided earlier in the paper. Another situation in which the exact same strategy is used is the one studied in \citep{bertucci2024forthcoming}, which is dimension $d = 1$ and where the divergence term is replaced by another term involving the Hilbert transform.

\subsection{Definition of viscosity solutions}
When working with HJ equations involving singular terms, it is natural to perturb the solution by an appropriate functional. This technique has proven quite useful in \citep{daudin1,bertucci2024forthcoming} for instance. Let us insist upon the fact that this idea has been present in the literature on viscosity solutions for quite a long time. In the case of \eqref{hjb}, this function is the entropy defined by
\[
E(µ) := \begin{cases} \int_{\R^d}\log(µ(x))µ(x)dx \text{ if } µ << \text{ } Leb,\\ + \infty \text{ else,}\end{cases}
\]
where $Leb$ is the Lebesgue measure. Note that $E$ is only defined on a dense subset with empty interior of $\Pt(\R^d)$, hence it is difficult to talk about a differential of $E$. Nonetheless, formally it is given by 
\[
"D_µE(µ,x) =\nabla_x \log(µ(x))".
\]
Observe that the size of this quantity can be given by the Fisher information defined by 
\[
\mathcal{I}(µ) = \int_{\R^d}|\nabla_x\log(µ(x))|^2µ(dx).
\]
The link between \eqref{hjb} and $E$ is then made clear through the formal following computation. Given a smooth map $f: \R^d \to \R^d$
\be\label{eq:382}
\ba
\int_{\R^d}\text{div}_x(f(x))µ(dx) &= - \int_{\R^d}f(x) \nabla_xµ(x)dx\\
&= -\int_{\R^d}f(x) \nabla_x(\log(µ(x)))µ(dx).
\ea
\ee
We shall work with the following definition of viscosity solutions.
\begin{Def}
An usc\footnote{usc stands for upper semi-continuous, while lsc stands for lower semi-continuous.} (resp. lsc) function $U: \Pt(\R^d) \to \R$ is a viscosity sub-solution (resp. super-solution) if for any $\kappa \in (0,C_F^{-1})$ (resp. $\kappa \in(-C_F^{-1}, 0)$), $C^{1,1}$ function $\phi : \Pt(\R^d) \to \R$, for any $µ^*$ point of maximum (resp. of minimum) of $µ \to U(µ) - \phi(µ) - \kappa E(µ)$, it holds that $\mathcal{I}(µ^*) < \infty$ and
\[
\ba
\lambda& \phi(µ^*) + \int_{\R^d}F(µ^*,x,D_µ\phi(µ^*,x) + \kappa\nabla_x(\log(µ(x))))µ^*(dx)\\
& - \int_{\R^d} \text{div}_x(D_µ\phi(µ^*,x) +\kappa \nabla_x(\log(µ(x)) ))µ^*(dx) \leq G(µ^*),
\ea
\]
(resp. the opposite inequality holds).
\end{Def}
Note that the previous is well defined by combining $\mathcal{I}(µ^*) < \infty$ and \eqref{eq:Fquad}. In particular, because $\mathcal{I}(µ^*) < \infty$, $µ \to W_2^2(µ,\nu)$ is differentiable at $µ^*$.

\subsection{Using the Wasserstein distance as a test function}
From now on, we assume that $F$ satisfies
\be\label{hypF}
\ba
\forall µ,\nu,x,y,p,q,\quad |F(µ,x,p) - F(µ,x,q)| \leq C_F(1 + M_2(µ) + |x| + |p| + |q|)|p-q|,\\
\forall µ,\nu,x,y,p,q,\quad |F(µ,x,p) - F(\nu,y,p)| \leq C_F(1 + W_2(\barµ,\bar \nu) + |x-y|)(1 +|p|).
\ea
\ee
For such a coupling $F$, we can indeed prove that we can use the squared Wasserstein distance as a test function. This is done in the next result, using slightly more involved function, as it will be helpful later on.
\begin{Prop}\label{prop:visc}
Let $U$ be a bounded viscosity sub-solution of \eqref{hjb} (resp. super-solution). Take $\nu \in \Pt(\R^d)$, $\alpha,\eps > 0, \kappa \in (0,C_F^{-1})$ (resp. $\alpha,\eps < 0$ and $\kappa \in (-C_F^{-1},0)$) and $\bar µ \in \Pt(\R^d)$ a point of strict maximum (resp. of strict minimum) of $µ \to U(µ) - \kappa E(µ) - \frac\alpha2\int_{\R^d}|x|^2µ(dx) - \frac{1}{2\eps}W_2^2(µ,\nu)$. Then, $\mathcal{I}(\barµ) < \infty$ and 
\[
\ba
\lambda &U(\barµ) + \int_{\R^d}F(\barµ,x,U(\barµ),p(x)+ \alpha x )\barµ(dx) - \int_{\R^d} \text{div}_x(p(x) + \alpha x+ \kappa \nabla_x(\log(\barµ(x))))\barµ(dx)\\
& \leq G(\barµ) + C\kappa(\kappa\mathcal{I}(µ_\delta) + \sqrt{\mathcal{I}(\barµ)}(\eps^{-1}W_2(\barµ,\nu) +  M_2(\barµ)),
\ea
\]
where $C$ depends only on $F$ and $p$ is given by $p(x) = \eps^{-1}(x - T(x))$ where $T$ is the optimal transport map of $\bar µ$ toward $\nu$. (Respectively the opposite inequality holds.)
\end{Prop}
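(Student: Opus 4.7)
The overall approach is to replace the non-smooth penalization $\tfrac{1}{2\eps} W_2^2(\cdot,\nu)$ by its $C^{1,1}$ approximation $\tfrac{1}{2\eps}\Phi_\delta(\cdot,\nu)$, so that the viscosity sub-solution definition becomes applicable, and then to pass to the limit $\delta \to 0$ using the gradient convergence of Theorem~\ref{thm:main}. Concretely, I set
\[
\phi_\delta(\mu) := \frac{\alpha}{2}\int_{\R^d}|x|^2\mu(dx) + \frac{1}{2\eps}\Phi_\delta(\mu,\nu),
\]
which is $C^{1,1}$ on $\mpr$, and introduce the perturbed functional $J_\delta(\mu) := U(\mu) - \kappa E(\mu) - \phi_\delta(\mu)$. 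By the sandwich estimate \eqref{eq:encadrement}, $\Phi_\delta(\cdot,\nu) \to W_2^2(\cdot,\nu)$ locally uniformly, and since $\bar\mu$ is a strict maximum of the limiting functional, a standard perturbation argument on a $W_2$-bounded neighborhood of $\bar\mu$ produces maxima $\mu_\delta$ of $J_\delta$ with $\mu_\delta \to \bar\mu$ in $W_2$; sandwiching the inequalities $J_\delta(\mu_\delta) \geq J_\delta(\bar\mu)$ and $J_0(\bar\mu) \geq J_0(\mu_\delta)$ also forces $E(\mu_\delta) \to E(\bar\mu)$ and $U(\mu_\delta) \to U(\bar\mu)$.

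Applying the viscosity sub-solution definition at $\mu_\delta$ with the $C^{1,1}$ test function $\phi_\delta$ yields $\mathcal{I}(\mu_\delta) < \infty$ and a pointwise inequality in which $D_\mu\phi_\delta(\mu_\delta,x) = \alpha x + \tfrac{1}{2\eps} D_\mu\Phi_\delta(\mu_\delta,x)$ enters inside $F$ and inside the divergence. To obtain a uniform bound on $\mathcal{I}(\mu_\delta)$, I rewrite the divergence via the integration-by-parts identity \eqref{eq:382}, which exhibits $\kappa\mathcal{I}(\mu_\delta)$ explicitly on the left-hand side; the subquadratic growth \eqref{eq:Fquad} together with $\kappa C_F < 1$ then allows one to absorb the portion of $\mathcal{I}(\mu_\delta)$ coming from the quadratic control of $F$. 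This yields the uniform bound and hence $\mathcal{I}(\bar\mu) < \infty$ by lower semicontinuity of $\mathcal{I}$.

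The final step is the passage to the limit $\delta \to 0$, where the crucial input is the Corollary to Theorem~\ref{thm:main}: along optimal couplings $\gamma_\delta \in \Pi(\bar\mu,\mu_\delta)$, the map $\tfrac{1}{2\eps}D_\mu\Phi_\delta(\mu_\delta,\cdot)$ converges in $L^2$ to $p(\cdot) = \eps^{-1}(\cdot - T(\cdot))$. Combined with the Lipschitz hypothesis \eqref{hypF} and the convergence $W_2(\mu_\delta,\bar\mu) \to 0$, this transports the $F$-integral to its value at $\bar\mu$, and the divergence term involving $\alpha x$ and $D_\mu\Phi_\delta$ passes to the corresponding term at $\bar\mu$ with $p(x)+\alpha x$. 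The cross terms $\kappa\int \nabla_x\log\mu_\delta \cdot (\alpha x + \eps^{-1}(x-T(x)))\mu_\delta(dx)$ surviving from the coupling between $\kappa\nabla\log\mu_\delta$ and $D_\mu\phi_\delta$ are then controlled by Cauchy-Schwarz in terms of $\sqrt{\mathcal{I}}$, producing precisely the announced error term $C\kappa\sqrt{\mathcal{I}(\bar\mu)}(\eps^{-1}W_2(\bar\mu,\nu) + M_2(\bar\mu))$, together with the $\kappa^2\mathcal{I}$ contribution that could not be fully absorbed. The main obstacle throughout is the control of the singular divergence involving $\kappa\nabla_x\log\mu_\delta$: it is precisely because $W_2^2(\cdot,\nu)$ is not $C^{1,1}$ that the approximation $\Phi_\delta$ is indispensable, and the strong $L^2$ convergence of its gradients along Wasserstein-converging couplings, provided by Theorem~\ref{thm:main}, is what makes the cross terms and the limiting divergence close up cleanly.
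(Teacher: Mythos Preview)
Your overall strategy matches the paper's: approximate $W_2^2(\cdot,\nu)$ by $\Phi_\delta$, apply the viscosity definition at maximizers $\mu_\delta$ of the perturbed functional, bound $\mathcal{I}(\mu_\delta)$ uniformly via the sub-quadratic growth of $F$ and the constraint $\kappa C_F<1$, and pass to the limit through Theorem~\ref{thm:main}. There is, however, a genuine gap in your treatment of the divergence term.

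You assert that ``the divergence term involving $\alpha x$ and $D_\mu\Phi_\delta$ passes to the corresponding term at $\bar\mu$ with $p(x)+\alpha x$'', but this is precisely the delicate step. After integration by parts one has
\[
-\int_{\R^d}\text{div}_x\bigl(D_\mu\Phi_\delta(\mu_\delta,x)\bigr)\,\mu_\delta(dx)=\int_{\R^d} D_\mu\Phi_\delta(\mu_\delta,x)\cdot\nabla_x\log(\mu_\delta(x))\,\mu_\delta(dx),
\]
and passing to the limit requires a weak--strong argument: $D_\mu\Phi_\delta(\mu_\delta,\cdot)$ converges strongly by Theorem~\ref{thm:main}, but $\nabla_x\log\mu_\delta$ is only bounded in $L^2$. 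One must identify its weak limit as $\nabla_x\log\bar\mu$, and this does \emph{not} follow merely from $W_2(\mu_\delta,\bar\mu)\to 0$ together with $\sup_\delta\mathcal{I}(\mu_\delta)<\infty$. The paper isolates this point as a separate lemma (Lemma~\ref{lemma:convE}) and uses the displacement convexity of the entropy $E$ along geodesics to pin down the weak limit; without that argument the divergence term does not close.

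A second, smaller issue: the error term $C\kappa\bigl(\kappa\mathcal{I} + \sqrt{\mathcal{I}}(\eps^{-1}W_2 + M_2)\bigr)$ is not produced by ``cross terms'' in the divergence. The divergence of $D_\mu\phi_\delta+\kappa\nabla\log\mu_\delta$ splits cleanly into $\int D_\mu\phi_\delta\cdot\nabla\log\mu_\delta\,\mu_\delta$ (which carries no factor of $\kappa$ and must be \emph{kept}, not estimated away) plus $\kappa\mathcal{I}(\mu_\delta)$. The announced error arises instead from removing $\kappa\nabla_x\log\mu_\delta$ from the argument of $F$ via the Lipschitz hypothesis \eqref{hypF} and Cauchy--Schwarz, exactly as in the paper's inequality \eqref{eq:419}.
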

\begin{proof}
We only prove the result for sub-solutions, the other case being symmetric. Take $\delta \in (0,1)$ and consider $\Phi_\delta$ defined in \eqref{defPhi}. For any $\delta$, consider $µ_\delta \in \Pt(\R^d)$ a point of maximum of $µ \to U(µ) - \kappa E(µ) - \frac\alpha2\int_{\R^d}|x|^2µ(dx) - \frac{1}{2\eps}\Phi_\delta(µ)$. By assumption, we obtain that
\be\label{eq:425}
\ba
\lambda &U(µ_\delta) + \int_{\R^d}F(µ_\delta,x,\eps^{-1}D_µ\Phi_\delta(µ_\delta,x)+ \alpha x + \kappa\nabla_x(\log(µ_\delta(x))))µ_\delta(dx)\\
& - \int_{\R^d} \text{div}_x(\eps^{-1}D_µ\Phi_\delta(µ_\delta,x) + \alpha x + \kappa \nabla_x(\log(µ_\delta(x))))µ_\delta(dx) \leq G(µ_\delta).
\ea
\ee
Note that $(µ_\delta)_{\delta > 0}$ converges weakly toward $\bar µ$ as $\delta \to 0$. Moreover, by construction of $µ_\delta$, we also obtain that $(M_2(µ_\delta))_{\delta > 0}$ is bounded. Using \eqref{eq:Fquad} and \eqref{eq:382} with $f =\nabla_x (\log(µ))$, as well as standard quadratic estimates, we deduce that 
\[
\ba
(\kappa - C_F\kappa^2) \mathcal{I}(µ_\delta) \leq &\left(\alpha\sqrt{M_2(µ_\delta)} +\eps^{-1}\|D_µ\Phi_\delta(µ_\delta)\|_{L^2(µ_\delta)}\right)\sqrt{\mathcal{I}(µ_\delta)} \\
&+ 2C_F(\alpha^2M_2(µ_\delta) + \eps^{-2}\|D_µ\Phi_\delta(µ_\delta)\|_{L^2(µ_\delta)}^2) + \|G\|_\infty + \lambda\|U\|_\infty.
\ea
\]
 Hence, we obtain that there exists $C>0$ depending only on $\|G\|_\infty, \|U\|_\infty, \lambda$ and $F$ such that
 \[
 \kappa \sqrt{\mathcal{I}(µ_\delta)} \leq C\left(1+ \alpha\sqrt{M_2(µ_\delta)} +\eps^{-1}\|D_µ\Phi_\delta(µ_\delta)\|_{L^2(µ_\delta)}\right).
 \]
 Since $µ \to W_2^2(µ,\nu)$ is differentiable at $\bar µ$, recall that thanks to Theorem \ref{thm:main}, 
\[
\lim_{\delta \to 0}\|D_µ\Phi_\delta(µ_\delta)\|_{L^2(µ_\delta)} = 2W_2(\bar µ,\nu).
\]
 Having these estimates in hand, it now remains to show that we can indeed pass to the limit into \eqref{eq:425}. The upper-semi continuity of $U$ and the continuity of $G$ yields
\[
\lambda U(µ_\delta) \to_{\delta \to 0} \lambda U(\bar µ), \quad \lim_{\delta \to 0}G(µ_\delta) = G(\bar µ).
\]
Observe now that thanks to \eqref{hypF},
\[
\ba
&|F(µ_\delta,x,\eps^{-1}D_µ\Phi_\delta(µ_\delta,x)+ \alpha x + \kappa\nabla_x(\log(µ_\delta(x)))) - F(µ_\delta,x,\eps^{-1}D_µ\Phi_\delta(µ_\delta,x)+ \alpha x )|\\
&\leq C_F\kappa (1 + |\eps^{-1}D_µ\Phi_\delta(µ_\delta,x)| + M_2(µ_\delta)+ \alpha |x| + \kappa|\nabla_x(\log(µ_\delta(x)))|)|\nabla_x(\log(µ_\delta(x)))|.
\ea
\]
Hence, using Cauchy-Schwarz inequality, we deduce that
\be\label{eq:419}
\ba
 \int_{\R^d}&F(µ_\delta,x,\eps^{-1}D_µ\Phi_\delta(µ_\delta,x)+ \alpha x + \kappa\nabla_x(\log(µ_\delta(x))))µ_\delta(dx)\\
 & \geq  \int_{\R^d}F(µ_\delta,x,\eps^{-1}D_µ\Phi_\delta(µ_\delta,x)+ \alpha x)µ_\delta(dx)\\
 & \quad -C_F\kappa\left(\kappa \mathcal{I}(µ_\delta) + \sqrt{\mathcal{I}(µ_\delta)}\left(\eps^{-1}\|D_µ\Phi_\delta(µ_\delta)\|_{L^2(µ_\delta)} + \alpha \sqrt{M_2(µ_\delta)} + M_2(µ_\delta)\right)\right),
\ea
\ee
for a possible different $C_F$, but which depends only on $F$. To pass to the limit in the right hand side, remark that $(\mathcal{I}(µ_\delta))_{\delta}$ and $(M_2(µ_\delta))_{\delta}$ are bounded (in $\delta$). It now remains to show the convergence of the first term of the right hand side of \eqref{eq:419}. In order to do so, consider $X_\delta \in H, X_\delta \sim µ_\delta$ which converges toward $\bar X \in H, \bar X \sim \bar µ$. Let us remark that
\[
\ba
 \int_{\R^d}F(µ_\delta,x,U(µ_\delta),\eps^{-1}D_µ\Phi_\delta(µ_\delta,x)+ \alpha x )µ_\delta(dx)\\
 = \mathbb{E}\left[ F(µ_\delta, X_\delta, U(µ_\delta),\eps^{-1}\nabla U_\delta(X_\delta) + \alpha X_\delta) \right].
 \ea
\]
Using Theorem \ref{thm:main} and the regularity of $F$, we deduce that
\[
\ba
\lim_{\delta \to 0} \int_{\R^d}F(µ_\delta,x,U(µ_\delta),\eps^{-1}D_µ\Phi_\delta(µ_\delta,x)+ \alpha x )µ_\delta(dx)\\
=  \int_{\R^d}F(\bar µ,x,U(\bar µ),\eps^{-1}p(x)+ \alpha x )\barµ(dx).
\ea
\]
To complete the proof, it only remains to remark that the term $\int_{\R^d}\text{div}_x (D_µ\Phi_\delta(µ_\delta,x))µ_\delta(dx)$ converges to the correct limit, which is proved in a separate Lemma.
\end{proof}

\begin{Lemma}\label{lemma:convE}
Under the assumptions and notation of the proof of Proposition \ref{prop:visc},
\[
\ba
\lim_{\delta \to 0}\int_{\R^d}\text{div}_x (D_µ\Phi_\delta(µ_\delta,x))µ_\delta(dx) = \int_{\R^d}\text{div}_x (p(x))\barµ(dx).
\ea
\]
\end{Lemma}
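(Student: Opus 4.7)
The plan is to reduce the singular divergence to an ordinary $L^2$ pairing via integration by parts, and then pass to the limit in the Hilbertian lift, combining the strong convergence of $\nabla U_\delta$ from Theorem \ref{thm:main} with a weak convergence of the lifted score fields that comes from the uniform Fisher-information bound.

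First, I would apply \eqref{eq:382} on both sides, which is legitimate thanks to the uniform bound $\mathcal{I}(\mu_\delta)\leq C$ obtained in the proof of Proposition \ref{prop:visc}, together with $\mathcal{I}(\bar\mu)<\infty$ inherited from the lower semi-continuity of the Fisher information along the $W_2$-convergence $\mu_\delta\to\bar\mu$. This reduces the statement to
\[
\lim_{\delta\to 0}\int_{\R^d}D_\mu\Phi_\delta(\mu_\delta,x)\cdot\nabla_x\log(\mu_\delta(x))\,\mu_\delta(dx)=\int_{\R^d}D_\mu\Phi(\bar\mu,x)\cdot\nabla_x\log(\bar\mu(x))\,\bar\mu(dx)
\]
(up to the multiplicative factor relating $D_\mu\Phi(\bar\mu,\cdot)$ to $p$).

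Next, I would pass to $H$: pick $X_\delta\sim\mu_\delta$ converging in $H$ to $\bar X\sim\bar\mu$, which is possible since $W_2(\mu_\delta,\bar\mu)\to 0$. Set $v_\delta:=\nabla U_\delta(X_\delta)$, $v:=\nabla U(\bar X)=2(\bar X-T(\bar X))$, $S_\delta:=(\nabla_x\log\mu_\delta)(X_\delta)$, and $S:=(\nabla_x\log\bar\mu)(\bar X)$. Theorem \ref{thm:main} gives $v_\delta\to v$ strongly in $H$, and $\|S_\delta\|_H^2=\mathcal{I}(\mu_\delta)\leq C$, so $(S_\delta)$ is bounded (hence weakly precompact) in $H$. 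The desired identity becomes $\mathbb{E}[v_\delta\cdot S_\delta]\to\mathbb{E}[v\cdot S]$, and I would split
\[
\mathbb{E}[v_\delta\cdot S_\delta]-\mathbb{E}[v\cdot S]=\mathbb{E}[(v_\delta-v)\cdot S_\delta]+\mathbb{E}[v\cdot(S_\delta-S)];
\]
Cauchy--Schwarz applied to the first term, combined with $v_\delta\to v$ strongly and the Fisher bound on $\|S_\delta\|_H$, disposes of it.

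For the second term I would identify the relevant weak limit of $(S_\delta)$. For any smooth compactly supported $\psi:\R^d\to\R^d$, integration by parts on $\R^d$ gives $\mathbb{E}[S_\delta\cdot\psi(X_\delta)]=-\int\text{div}(\psi)\,d\mu_\delta\to-\int\text{div}(\psi)\,d\bar\mu=\mathbb{E}[S\cdot\psi(\bar X)]$, and since $\psi(X_\delta)\to\psi(\bar X)$ strongly in $H$, we deduce $\mathbb{E}[S_\delta\cdot\psi(\bar X)]\to\mathbb{E}[S\cdot\psi(\bar X)]$. Since $v=\phi(\bar X)$ with $\phi(x):=2(x-T(x))\in L^2(\bar\mu;\R^d)$ only measurable, I would approximate $\phi$ in $L^2(\bar\mu)$ by smooth compactly supported $\psi_n$, use the uniform bound $\|S_\delta\|_H\leq C$ to control $|\mathbb{E}[(v-\psi_n(\bar X))\cdot S_\delta]|\leq C\|\phi-\psi_n\|_{L^2(\bar\mu)}$ uniformly in $\delta$, and close by a diagonal argument. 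The main obstacle is precisely this last step: pairing weakly convergent score fields against an only-measurable reference vector field. The density of smooth fields in $L^2(\bar\mu;\R^d)$ combined with the strong convergence from Theorem \ref{thm:main} is the technical crux, and without the latter the varying test field $v_\delta$ would be inaccessible to a weak-type limit argument.
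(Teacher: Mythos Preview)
Your proposal is correct and shares the same architecture as the paper's proof: integrate by parts via \eqref{eq:382}, lift to $H$, and exploit the weak--strong pairing enabled by Theorem~\ref{thm:main} together with the uniform Fisher bound. The only genuine difference lies in how you identify the limiting action of the score fields $S_\delta$ on $\sigma(\bar X)$-measurable test vectors. You use the elementary identity $\mathbb{E}[S_\delta\cdot\psi(X_\delta)]=-\int \text{div}(\psi)\,d\mu_\delta$ for $\psi\in C^\infty_c$, pass to the limit by weak convergence of $\mu_\delta$, swap $\psi(X_\delta)$ for $\psi(\bar X)$ using its strong convergence, and then approximate $\phi=2(\,\cdot\,-T(\,\cdot\,))$ in $L^2(\bar\mu)$ by such $\psi$. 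The paper instead appeals to McCann's displacement convexity of the entropy: from $E((Id+hf)_\#\mu_\delta)\geq E(\mu_\delta)+h\,\mathbb{E}[S_\delta\cdot f(X_\delta)]$ it passes to the limit (this also requires $E(\mu_\delta)\to E(\bar\mu)$, which follows from the Fisher bound) and reads off the derivative of $E$ at $\bar\mu$ to identify the weak limit with $\nabla_x\log\bar\mu(\bar X)$. Your route is more self-contained and avoids invoking both McCann's theorem and the continuity of $E$; the paper's route, on the other hand, is consistent with the displacement-convexity argument it reuses later in the comparison principle to show $I\leq 0$.
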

\begin{proof}
Let us start by remarking that
\[
\int_{\R^d}\text{div}_x (D_µ\Phi_\delta(µ_\delta,x))µ_\delta(dx)  = - \int_{\R^d}D_µ\Phi_\delta(µ_\delta,x)\nabla_x(\log(µ_\delta(x)))µ_\delta(dx).
\]
Hence, denoting $\psi_\delta(x) = \nabla_x(\log(µ_\delta(x)))$, we deduce that 
\[
\int_{\R^d}\text{div}_x (D_µ\Phi_\delta(µ_\delta,x))µ_\delta(dx) = -\mathbb{E}[\nabla U_\delta(X_\delta)\psi_\delta(X_\delta)].
\]
Since $(\mathcal{I}(µ_\delta))_\delta$ is bounded, we deduce that $(\psi_\delta(X_\delta))_\delta$ is bounded in $H$, thus it converges weakly to some limit $Z \in H$. Observe that the proof is complete by using a weak-strong convergence result, thanks to Theorem \ref{thm:main}, if we can show that $Z = \nabla_x(\log(\barµ(\bar X)))$. We now show that this is the case, using a form of convexity of $E$. Recalling the results of McCann \citep{mccann}, we know that $E$ is displacement convex. This implies in particular that for any smooth function $f: \R^d \to \R^d$ with bounded derivatives, for $h \in (\frac{-\|D_xf\|_\infty}{2},\frac{-\|D_xf\|_\infty}{2})$ 
\[
h \to E((Id + h f)_\#µ_\delta) \text{ is convex,}
\]
since the optimal transport of $µ_\delta$ toward $(Id + h f)_\#µ_\delta$ is obtained precisely through the map $Id + h f$. This yields
\[
E((Id + h f)_\#µ_\delta) \geq E(µ_\delta) + h\int_{\R^d}\psi_\delta(x)f(x)µ_\delta(dx).
\]
Passing to the limit $\delta \to 0$ produces the required result. Indeed, in the Hilbert space for instance, the previous can be written
\[
E((Id + h f)_\#µ_\delta) \geq E(µ_\delta) + h\mathbb{E}[\psi_\delta(X_\delta)f(X_\delta)],
\]
and passing to the limit $\delta \to 0$, since we can always choose $(X_\delta)_{\delta > 0}$ converging strongly toward $\bar X$, implies
\[
E((Id + h f)_\#\barµ) \geq E(\barµ) + h\mathbb{E}[Zf(\bar X)].
\]
Note that since $(\mathcal{I}(µ_\delta))_\delta$ is bounded, we deduce that $E(µ_\delta) \to E(\bar µ)$. Recall now that $\mathcal{I}(\bar µ) < \infty$ and that the derivative of $E$ at $\bar µ$ in smooth variations can be computed explicitly to obtain that $Z = \nabla_x \log(\bar µ(\bar X))$.
\end{proof}
An immediate corollary of Proposition \ref{prop:visc} is the following.
\begin{Cor}
Under the same notation as in Proposition \ref{prop:visc}, if $\bar µ$ is a point of strict maximum of $µ \to U(µ) - \kappa E(µ) - \frac\alpha2\int_{\R^d}|x|^2µ(dx) - \int_{\R^d}gdµ - \frac{1}{2\eps}W_2^2(µ,\nu)$ for a smooth bounded $g$. Then, $\mathcal{I}(\barµ) < \infty$ and 
\[
\ba
\lambda &U(\barµ) + \int_{\R^d}F(\barµ,x,U(\barµ),p(x)+ \alpha x + \nabla g(x))\barµ(dx)\\
& - \int_{\R^d} \text{div}_x(p(x) + \alpha x+ \nabla g(x) +\kappa \nabla_x(\log(\barµ(x))))\barµ(dx)\\
& \leq G(\barµ) + C\kappa(\kappa\mathcal{I}(µ_\delta) + \sqrt{\mathcal{I}(\barµ)}(\eps^{-1}W_2(\barµ,\nu) +  M_2(\barµ)),
\ea
\]
for the same constant $C$ as in Proposition \ref{prop:visc}. The similar also holds for super-solutions.
\end{Cor}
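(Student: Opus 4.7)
The plan is to repeat the argument of Proposition \ref{prop:visc} verbatim, absorbing the new potential $\phi_g(\mu) := \int_{\R^d} g\, d\mu$ into the family of test functions used against $U$. The key observation is that, for $g$ smooth and bounded (with bounded derivatives), $\phi_g$ is $C^{1,1}$ on $\Pt(\R^d)$ in the sense recalled in Section 2.2, with intrinsic derivative $D_\mu\phi_g(\mu,x) = \nabla g(x)$ and $\mathrm{div}_x\,\nabla g(x) = \Delta g(x)$ uniformly bounded in $(\mu,x)$. In particular, adding $\phi_g$ to the test function $\frac{\alpha}{2}\int|x|^2 d\mu + \frac{1}{2\eps}\Phi_\delta(\mu)$ preserves its $C^{1,1}$-regularity, so the viscosity inequality of the definition can legitimately be applied.

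First I would introduce, for $\delta\in(0,1)$, a maximizer $\mu_\delta$ of the regularized functional
\[
\mu\mapsto U(\mu)-\kappa E(\mu)-\frac{\alpha}{2}\int_{\R^d}|x|^2\,d\mu-\int_{\R^d} g\,d\mu-\frac{1}{2\eps}\Phi_\delta(\mu),
\]
and observe, exactly as in Proposition \ref{prop:visc}, that $(\mu_\delta)_\delta$ converges weakly to $\bar\mu$ and that $(M_2(\mu_\delta))_\delta$ is bounded (boundedness of $g$ is harmless here). Applying the viscosity sub-solution inequality then yields the analogue of \eqref{eq:425} with the additional vector field $\nabla g(x)$ appearing inside both $F(\cdot)$ and $\mathrm{div}_x$. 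The Fisher information estimate runs through unchanged, since the extra contributions $\alpha\sqrt{M_2(\mu_\delta)}$ and $\|\nabla g\|_\infty$ only enlarge the constant absorbed into $C$.

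The passage to the limit $\delta\to 0$ then follows the same path as before. The $F$-integral converges thanks to Theorem \ref{thm:main} and the regularity assumption \eqref{hypF}, together with the strong (indeed pointwise) continuity of $\nabla g$. The divergence of the $D_\mu\Phi_\delta$-part converges by Lemma \ref{lemma:convE}, whereas the $\nabla g$-part contributes $\int_{\R^d}\Delta g\,d\mu_\delta \to \int_{\R^d}\Delta g\,d\bar\mu$ trivially, by the weak convergence $\mu_\delta\rightharpoonup\bar\mu$, the boundedness/continuity of $\Delta g$, and the uniform bound on second moments. The $\alpha x$-term is handled exactly as in Proposition \ref{prop:visc}. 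The super-solution case is symmetric. The only mild subtlety is keeping track of all the cross terms in the Cauchy--Schwarz bound that gives \eqref{eq:419}, but $\|\nabla g\|_\infty<\infty$ makes these terms strictly easier than the ones already treated, so no new obstacle arises.
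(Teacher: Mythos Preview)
Your proposal is correct and matches the paper's intent: the corollary is stated as ``an immediate corollary of Proposition \ref{prop:visc}'' with no separate proof, and your argument---rerunning the proof of Proposition \ref{prop:visc} with the $C^{1,1}$ linear functional $\mu\mapsto\int g\,d\mu$ absorbed into the test function---is exactly the reason it is immediate. The only caveat is that ``smooth bounded $g$'' should be read (as you do, and as the subsequent comparison principle confirms) with bounded first and second derivatives, so that $\nabla g$ and $\Delta g$ are bounded and the extra terms are harmless.
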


\subsection{Comparison principle}
We now end this paper by giving a comparison principle for \eqref{hjb}. Note that, unlike most comparison principles found in the literature around \eqref{hjb} \citep{daudin1,daudin2,bayraktar,soner}, the following does not require any Lipschitz continuity on $G$ or on the viscosity sub/super solutions. We are not asking for any additional regularity on the viscosity sub and super solutions and only on uniform continuity on $G$. Let us note that, for simpler Hamiltonians, \citep{daudin2} established a similar results by approximation of measures by combination of Dirac masses.
\begin{Theorem}
Assume that $U$ and $V$ are respectively bounded viscosity sub and super solutions of \eqref{hjb}, with right hand side given by respectively $G^1$ and $G^2$, two mappings from $\Pt(\R^d)$ into $\R$ such that one of them is uniformly continuous. Then, 
\[
\lambda\sup_{\Pt(\R^d)} (U- V) \leq\sup_{\Pt(\R^d)} (G^1 - G^2).
\]
\end{Theorem}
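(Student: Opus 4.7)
The plan is a classical doubling-of-variables argument, made possible for the non-smooth test function $W_2^2$ by Proposition \ref{prop:visc}. For parameters $\kappa \in (0, C_F^{-1})$ and $\alpha, \eps, \eta > 0$, introduce on $\Pt(\R^d)^2$ the functional
\[
\Psi(\mu_1, \mu_2) = U(\mu_1) - V(\mu_2) - \kappa(E(\mu_1) + E(\mu_2)) - \alpha(M_2(\mu_1) + M_2(\mu_2)) - \frac{1}{2\eps} W_2^2(\mu_1, \mu_2) - \eta \chi(\mu_1, \mu_2),
\]
where $\chi$ is a small continuous perturbation enforcing strict maximality. Upper/lower semicontinuity of $U$ and $-V$, lower semicontinuity of $E$, $M_2$, $W_2^2$ under weak convergence, and the tightness provided by the $\alpha M_2$ term together ensure that the supremum of $\Psi$ is attained at some pair $(\bar\mu_1, \bar\mu_2)$.

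At this maximizer, $\bar\mu_1$ is a strict maximum of the sub-solution test (with reference measure $\bar\mu_2$), while $\bar\mu_2$ is a strict minimum of the super-solution test (with reference $\bar\mu_1$ and sign-reversed parameters $(-\kappa,-\alpha,-\eps)$). Applying the Corollary of Proposition \ref{prop:visc} yields $\mathcal{I}(\bar\mu_1), \mathcal{I}(\bar\mu_2) < \infty$ and two inequalities involving the vector fields $p_1(x) = \eps^{-1}(x - T_1(x))$ and $p_2(y) = \eps^{-1}(T_2(y) - y)$, where $T_1$ and $T_2 = T_1^{-1}$ are the optimal transport maps between $\bar\mu_1$ and $\bar\mu_2$. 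The key algebraic observation is that on the optimal coupling $\gamma^\star \in \Pi(\bar\mu_1, \bar\mu_2)$ one has $p_1(x) = p_2(y)$ almost surely, so the two momenta match.

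Subtracting the two inequalities and writing all integrals against $\gamma^\star$, hypothesis \eqref{hypF} controls the difference of $F$-integrals by Cauchy-Schwarz-bounded quantities of orders $\eps^{-1} W_2^2(\bar\mu_1,\bar\mu_2)$, $\alpha(1 + M_2(\bar\mu_i))$ and $\kappa\sqrt{\mathcal{I}(\bar\mu_i)}(1+|p_i|)$. The divergence terms, rewritten via \eqref{eq:382}, contribute a favourable $\kappa \mathcal{I}(\bar\mu_i)$ on each side that absorbs the $\kappa^2 \mathcal{I}$ piece arising from $F$-Lipschitz estimates in the momentum argument (this is exactly where the restriction $\kappa < C_F^{-1}$ enters). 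All remaining errors from Proposition \ref{prop:visc} are $O(\kappa)$ after invoking the a priori bound $\kappa\sqrt{\mathcal{I}(\bar\mu_i)} \le C$ that it also supplies. The outcome of this step is
\[
\lambda\bigl(U(\bar\mu_1) - V(\bar\mu_2)\bigr) \le G^1(\bar\mu_1) - G^2(\bar\mu_2) + o(1),
\]
where $o(1)$ refers to the combined limit $\eta, \kappa, \eps, \alpha \to 0$ taken in that order.

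Finally one sends the parameters to zero. Comparing $\Psi(\bar\mu_1, \bar\mu_2)$ with $\Psi(\mu, \mu)$ for a generic $\mu$ of finite entropy and second moment gives simultaneously that $\tfrac{1}{2\eps} W_2^2(\bar\mu_1, \bar\mu_2)$ is bounded uniformly in $\eps$ (so that $W_2(\bar\mu_1, \bar\mu_2) \to 0$ and $\bar\mu_1, \bar\mu_2$ share a common accumulation point $\bar\mu$), and that $U(\bar\mu_1) - V(\bar\mu_2)$ stays above $\sup_\mu(U(\mu) - V(\mu)) - o(1)$. The uniform continuity of one of $G^1, G^2$ then yields $G^1(\bar\mu_1) - G^2(\bar\mu_2) \le \sup(G^1 - G^2) + o(1)$ in the $\eps \to 0$ limit, and assembling these facts with the displayed inequality gives $\lambda \sup(U - V) \le \sup(G^1 - G^2)$. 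The main obstacle is the bookkeeping of the third paragraph: the $\kappa\mathcal{I}$-scale quantities coming out of the divergence and of the $F$-Lipschitz correction must combine with opposite signs so that the $\kappa \to 0$ passage is clean, which is precisely why $\kappa$ is constrained to $(0, C_F^{-1})$. A secondary point is that the use of $W_2^2(\cdot, \bar\mu_2)$ as a non-smooth test function for a viscosity solution is legitimate only via the approximation $\Phi_\delta$ developed in the first part of the paper and internalized by Proposition \ref{prop:visc}.
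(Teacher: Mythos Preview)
Your outline tracks the paper's strategy closely and gets most of the architecture right: doubling of variables with entropy and second-moment penalizations, invoking Proposition~\ref{prop:visc} (via its Corollary) on each factor, matching momenta on the optimal coupling, and sending the parameters to zero. However, there is one genuine gap.

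When you subtract the two viscosity inequalities, the divergence terms split into two pieces: the entropy piece $-\int_{\R^d}\text{div}_x(\kappa\nabla_x\log\bar\mu_i)\,\bar\mu_i = \kappa\,\mathcal{I}(\bar\mu_i)$, which you correctly identify as favourable, and the Wasserstein piece
\[
I \;=\; \int_{\R^d}\text{div}_x\bigl(p_1(x)\bigr)\,\bar\mu_1(dx)\;-\;\int_{\R^d}\text{div}_x\bigl(p_2(y)\bigr)\,\bar\mu_2(dy),
\]
which you do not treat. The matching $p_1(x)=p_2(y)$ on the optimal coupling does \emph{not} make $I$ vanish: after rewriting via \eqref{eq:382} one has
\[
I \;=\; -\eps^{-1}\int_{\R^{2d}}(x-y)\cdot\bigl(\nabla\log\bar\mu_1(x)-\nabla\log\bar\mu_2(y)\bigr)\,\gamma^\star(dx,dy),
\]
and a naive Cauchy--Schwarz bound gives at best $|I|\le C\eps^{-1}W_2(\bar\mu_1,\bar\mu_2)\bigl(\sqrt{\mathcal{I}(\bar\mu_1)}+\sqrt{\mathcal{I}(\bar\mu_2)}\bigr)$. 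Combined with the a~priori estimate $\kappa\sqrt{\mathcal{I}}\le C$, this yields a term of order $\eps^{-1/2}\kappa^{-1}$, which blows up in the limit $\kappa\to 0$ (taken before $\eps\to 0$). The paper closes this gap by a sign argument: displacement convexity of the entropy $E$ along the optimal transport between $\bar\mu_1$ and $\bar\mu_2$ gives directly $I\le 0$, so this term can simply be dropped. This is the same convexity already used in Lemma~\ref{lemma:convE}, and it is the essential reason the second-order term can be handled at all.

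Two smaller remarks. First, your ``small continuous perturbation $\chi$'' needs to be compatible with Proposition~\ref{prop:visc}; the paper uses linear perturbations $\mu\mapsto\int g_i\,d\mu$ with $\|g_i\|_{C^2}\le\eta$ obtained from a Stegall-type lemma, so that the Corollary applies verbatim. Second, your order of limits ($\eta,\kappa,\eps,\alpha$) differs from the paper's ($\kappa,\alpha,\eta,\eps$); once the $I\le 0$ inequality is in place the bookkeeping is forgiving, but be aware that the modulus in the estimate $\eps^{-1}W_2^2\le\omega(\eps)$ a~priori depends on the remaining parameters.
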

\begin{proof}
Consider $\alpha \in (0,1), \eps, \eta, \kappa  > 0$. Using a variant of Stegall's Lemma \citep{phelps}, such as an immediate extension of Lemma 2.1 in \citep{bertucci2023monotone} for instance, we can always choose $g_1,g_2 \in C^2(\R^d,\R)$ with $\|g_1\|_{C^2}, \|g_2\|_{C^2} \leq \eta$, such that there exists $(\barµ,\bar\nu)$ a point of strict maximum of 
\[
\ba
(µ,\nu) \to U(µ)& - V(\nu) - \frac{1}{2\eps}W_2^2(µ,\nu) - \frac\alpha2(M_2(µ) + M_2(\nu)) - \kappa(E(µ) + E(\nu))\\
 &- \int_{\R^d}g_1(x)µ(dx) - \int_{\R^d}g_2(x)\nu(dx).
 \ea
\]
Using Proposition \ref{prop:visc}, we deduce that $\mathcal{I}(\bar µ) + \mathcal{I}(\bar \nu) < \infty$ and that
\be\label{visc:U}
\ba
\lambda& U(\bar µ) - \int_{\R^d}\text{div}_x(p(x) + \alpha x + \nabla_x g_1(x) + \kappa \nabla_x(\log(\barµ(x))))\barµ(dx) \\
&+ \int_{\R^d}F(\bar µ,x, p(x) + \alpha x+ \nabla_x g_1(x))\barµ(dx)\leq G_1(\bar µ) + C\kappa(\kappa \mathcal{I}(\bar µ) + \sqrt{\mathcal{I}(\barµ)} (\eps^{-1}W_2^2(\barµ,\bar\nu) + M_2(\barµ)),
\ea
\ee
as well as
\be\label{visc:V}
\ba
\lambda& V(\bar \nu)- \int_{\R^d}\text{div}_x(q(x) + \alpha x + \nabla_x g_2(x)- \kappa \nabla_x(\log(\bar \nu(x))))\bar\nu(dx)  \\
&+ \int_{\R^d}F(\bar \nu,x, q(x) + \alpha x+ \nabla_x g_2(x))\bar\nu(dx)\geq G_2(\bar \nu) - C\kappa(\kappa \mathcal{I}(\bar \nu) + \sqrt{\mathcal{I}(\bar\nu)} (\eps^{-1}W_2^2(\barµ,\bar\nu) + M_2(\bar\nu)),
\ea
\ee
where $p(x) = \eps^{-1}(x- T(x))$ and $q(x) = \eps^{-1}(x - T^{-1}(x))$ for $T$ the optimal transport map from $\bar µ$ to $\bar \nu$ for the quadratic cost. Similarly to what we have done in the proof of Proposition \ref{prop:visc}, using \eqref{hypF}, we obtain that
\[
\ba
& \left|\int_{\R^d}F(\bar µ,x, p(x) + \alpha x+ \nabla_x g_1(x))\barµ(dx) -  \int_{\R^d}F(\bar µ,x, p(x) )\barµ(dx)\right|\\
 &\leq C (1 + \eps^{-1}W_2(\barµ,\bar \nu))(\alpha \sqrt{M_2(\bar µ)} + \eta),
 \ea
\]
with a similar inequality for $V$. Combining these informations with taking the difference of \eqref{visc:U} and \eqref{visc:V}, we obtain
\be\label{eq:relvisc}
\ba
\lambda& (U(\bar µ) - V(\bar \nu)) + \int_{\R^d}F(\bar µ,x, p(x) )\barµ(dx) + \kappa(\mathcal{I}(\barµ) + \mathcal{I}(\bar \nu))\\
&\quad- \int_{\R^d}F(\bar \nu,x, q(x))\bar\nu(dx)- \int_{\R^d}\text{div}_x(p(x))\bar µ(dx) +\int_{\R^d}\text{div}_x(q(x))\bar \nu(dx)\\
&\leq G_1(\bar µ) - G_2(\bar \nu) +  C(1 +\eps^{-1}W_2(\barµ,\bar\nu))(\alpha (\sqrt{M_2(\bar µ)} + \sqrt{M_2(\bar\nu)})+ \eta)\\
 &\quad+ C\kappa(\kappa (\mathcal{I}(\bar µ) + \mathcal{I}(\bar \nu)) + (\sqrt{\mathcal{I}(\barµ)} + \sqrt{\mathcal{I}(\bar \nu)}) (\eps^{-1}W_2^2(\barµ,\bar\nu) + M_2(\barµ) + M_2(\bar \nu)).
\ea
\ee
We can estimate
\[
\ba
\int_{\R^d}F(\bar \nu,x, q(x))\bar\nu(dx)-\int_{\R^d}F(\bar \mu,x,p(x) )\barµ(dx)  = &\int_{\R^d}F(\bar \nu,T(x),p(x) ) - F(\bar \mu,x,p(x) )\barµ(dx) \\
\leq & C\int_{\R^d}( 1 + |p(x)|)(W_2(\barµ,\bar\nu) + |T(x) - x|)\barµ(dx)\\
\leq & CW_2(\barµ,\bar\nu) (1 + \eps^{-1}W_2(\barµ,\bar \nu)),
\ea
\]
where we used $T_\#µ= \nu$ in the equality, \eqref{hypF} for the first inequality and Cauchy-Schwarz inequality for the second inequality.\\

Let us now note $\pi$ the optimal coupling between $\barµ$ and $\bar \nu$. We can compute
\[
\ba
I :&= \int_{\R^d}\text{div}_x(p(x))\bar µ(dx) -\int_{\R^d}\text{div}_x(q(x))\bar \nu(dx)\\
&=-\int_{\R^{2d}}(x-y)\cdot(\nabla \log(\barµ(x)) - \nabla \log(\bar \nu(y)))\pi(dx,dy). 
\ea
\]
Using, as in Lemma \ref{lemma:convE} the convexity of $E$, we deduce that
\[
\ba
E(\bar µ) + \int_{\R^{2d}}(y-x)\cdot\nabla\log(\barµ(x))\pi(dx,dy) \leq E(\bar \nu)\\
E(\bar \nu) + \int_{\R^{2d}}(x-y)\cdot\nabla\log(\bar\nu(y))\pi(dx,dy) \leq E(\bar \mu).
\ea
\]
Hence, we obtain that $I \leq 0$. Coming back to \eqref{eq:relvisc}, we deduce that
\be\label{eq:last}
\ba
\lambda (U(\bar µ) - V(\bar \nu)) + \kappa(\mathcal{I}(\barµ) + \mathcal{I}(\bar \nu)) \leq G_1(\bar µ) - G_2(\bar \nu) +C(1 +\eps^{-1}W_2(\barµ,\bar\nu))(\alpha \sqrt{M_2(\bar µ)} + \eta)\\
 + C\kappa(\kappa (\mathcal{I}(\bar µ) + \mathcal{I}(\bar \nu)) + (\sqrt{\mathcal{I}(\barµ)} + \sqrt{\mathcal{I}(\bar \nu)}) (\eps^{-1}W_2^2(\barµ,\bar\nu) + M_2(\barµ) + M_2(\bar \nu))\\
 +  CW_2(\barµ,\bar\nu) (1 + \eps^{-1}W_2(\barµ,\bar \nu)).
\ea
\ee
Using the continuity of either $G_1$ or $G_2$, we deduce that for a modulus of continuity $\omega$, $G_1(\bar µ) - G_2(\bar \nu) \leq \sup(G_1 - G_2) + \omega(W_2(\barµ,\bar \nu))$. The result shall follow by taking all the parameters to $0$, in some precise order, which we now explain.\\

Recall first that from standard estimates, there exists a modulus of continuity $\omega$ such that $\eps^{-1}W_2^2(\bar µ,\bar \nu) \leq \omega(\eps)$ and $\alpha (M_2(\bar\mu) +  M_2(\bar \nu)) \leq \omega(\alpha)$. Moreover, we clearly have $\lambda\sup(U-V) \leq \liminf \lambda U(\bar µ) - V(\bar \nu)$. Concerning the terms in $\kappa$ we make the following argument.

From the proof of Proposition \ref{prop:visc} for instance, $(u_\kappa)_{\kappa > 0} :=(\kappa(\sqrt{\mathcal{I}(\bar µ)} + \sqrt{\mathcal{I}(\bar \nu)}))_{\kappa >0}$ is bounded, with a bound which depends on all the other parameters. Hence, we deduce from \eqref{eq:last} that $(v_\kappa)_{\kappa > 0} :=(\kappa(\mathcal{I}(\bar µ) + \mathcal{I}(\bar \nu)))_{\kappa >0}$ is also bounded. It then implies that $\lim_{\kappa \to 0} u_\kappa = 0$, uniformly in the other arguments, i.e., up to changing $\omega$, we can assume that $u_\kappa \leq \omega(\kappa)$. Hence, combining all the arguments above leads to
\[
\ba
\lambda \sup(U-V) \leq  \sup(G_1 - G_2)& + \omega(\eps) + C(1 + \eps^{-\frac 12}\omega(\eps))(\omega(\alpha) + \eta)\\
& + C\omega(\kappa)(\omega(\eps) + \alpha^{-1}\omega(\alpha) + \omega(\eps)(1 + \eps^{-\frac 12}\omega(\eps))) + C \omega(\eps).
\ea
\] 
Hence the result follows by taking the limit, in this order, $\kappa \to 0, \alpha \to 0, \eta \to 0$ and then $ \eps \to 0$.
\end{proof}

\section*{Acknowledgments}
The authors have been partially supported by the Chair FDD/FIME (Institut Louis Bachelier). 
\bibliographystyle{plainnat}
\bibliography{bibmatrix}

\end{document}